\newtheorem{theorem}{Theorem}[section]
\newtheorem{proposition}[theorem]{Proposition}
\newtheorem{lemma}[theorem]{Lemma}
\theoremstyle{definition}
\newtheorem{remark}{Remark}
\providecommand{\customgenericname}{}
\newcommand{\newcustomtheorem}[2]{\newenvironment{#1}[1]
  {\renewcommand\customgenericname{#2}
   \renewcommand\theinnercustomgeneric{##1}\innercustomgeneric}{\endinnercustomgeneric}}
\newcommand{\newcustomlemma}[2]{\newenvironment{#1}[1]
  {\renewcommand\customgenericname{#2}
   \renewcommand\theinnercustomgeneric{##1} \innercustomgeneric}{\endinnercustomgeneric}}
\newcommand{\ga}{\gamma}
\newcommand{\Ga}{\Gamma}
\newcommand{\rr}{\mathbb{R}}
\newcommand{\rn}{\mathbb{R}^n}
\newcommand{\zz}{\mathbb{Z}}
\newcommand{\zn}{\mathbb{Z}^n}
\newcommand{\cc}{\mathbb{C}}
\newcommand{\wh}{\widehat}
\numberwithin{equation}{section}
\renewcommand{\le}{\leqslant}
\renewcommand{\ge}{\geqslant}
\def\|{{\boldsymbol{|}}}
\begin{document}

\begin{thanks}
{}
\end{thanks}

\author{Loukas Grafakos}
\address{L. Grafakos, Department of Mathematics, University of Missouri, Columbia, MO 65211, USA} 
\email{grafakosl@missouri.edu}

\author{Bae Jun Park}
\address{B. Park, School of Mathematics, Korea Institute for Advanced Study, Seoul 02455, Republic of Korea}
\email{qkrqowns@kias.re.kr}

\thanks{The first author would like to acknowledge the support of  the Simons Foundation grant 624733. The second author is supported in part by NRF grant 2019R1F1A1044075 and by a KIAS Individual Grant MG070001 at the Korea Institute for Advanced Study} 
\subjclass[2010]{Primary 42B15, 42B25, 42B30}

\title[Sharp Hardy space estimates for multipliers]{Sharp Hardy space estimates for multipliers}
\keywords{}

\begin{abstract} 
We provide an improvement of Calder\'on and Torchinsky's version \cite{Ca_To}  of the  H\"ormander multiplier theorem  on  Hardy spaces $H^p$ ($0<p<\infty$),  substituting  the Sobolev space $L_s^2(A_0)$ by the Lorentz-Sobolev space $L_s^{\tau^{(s,p)} ,\min(1,p) }(A_0)$, where $\tau^{(s,p)}  =\frac{n}{s-(n/\min{(1,p)}-n)}$ and 
$A_0$ is the annulus $\{\xi \in \rn:\,\, 1/2<|\xi|<2\}$. Our theorem also extends   that of Grafakos and Slav\'ikov\'a \cite{Gr_Sl} to the range $0<p \le 1$. Our result is sharp in the sense that the preceding Lorentz-Sobolev space   cannot be replaced by a larger Lorentz-Sobolev space $L^{r,q}_s(A_0)$ with $r< \tau^{(s,p)} $ or $q>\min(1,p)$.
\end{abstract}

\maketitle

\section{Introduction}\label{introduction}

Let $\mathscr S  (\rn)$ denote the Schwartz space and $\mathscr S'(\rn)$ the space of tempered distributions 
on $\rn$. For the Fourier transform of $f\in \mathscr S(\rn)$ we use the definition  $\widehat{f}(\xi):=\int_{\rn}{f(x)e^{-2\pi i\langle x,\xi\rangle}}dx$ and denote by $f^{\vee}(\xi):=\widehat{f}(-\xi)$ the inverse Fourier transform of $f$. We also extend these transforms to the space of tempered distributions.

Given a bounded function $\sigma$ on $\rn$, the multiplier operator $T_{\sigma}$ is defined as
\begin{equation*}
T_{\sigma}f(x):=\int_{\rn}{\sigma(\xi)\widehat{f}(\xi)e^{2\pi i\langle x,\xi\rangle}}d\xi
\end{equation*} for $f\in \mathscr S(\rn)$, 
where $\langle x,\xi\rangle$ is the dot product of $x$ and $\xi$  in $\rn$.
The classical Mikhlin multiplier theorem \cite{Mik} states that if a function $\sigma$, defined on $\rn$, satisfies
\begin{eqnarray*}
\big|  \partial_{\xi}^{\alpha}\sigma(\xi)  \big|\lesssim_{\alpha}|\xi|^{-|\alpha|}, \qquad |\alpha|\le \big[n/2\big]+1,
\end{eqnarray*}  then the operator $T_{\sigma}$ admits a bounded extension in $L^p(\rn)$ for $1<p<\infty$.
In \cite{Ho} H\"ormander sharpened  Mikhlin's result, using the weaker condition
\begin{eqnarray}\label{hocondition}
\sup_{j\in\zz}{\big\Vert \sigma(2^j\cdot)\widehat{\Psi}\big\Vert_{L^2_s(A_0)}}<\infty
\end{eqnarray} for $s>n/2$, where $L^2_s $ denotes  the standard $L^2$-based Sobolev space on $\rn$, $\Psi$ is a Schwartz function on $\rn$ whose Fourier transform is supported in the annulus $A_0=\{\xi:\,\,1/2<|\xi|<2\}$ and satisfies $\sum_{j\in \zz}{\widehat{\Psi}(2^{-j}\xi)}=1$, $\xi\not= 0$. 
 Calder\'on and Torchinsky \cite{Ca_To} proved that if (\ref{hocondition}) holds for $s>n/p-n/2$, then $\sigma$ is a Fourier multiplier of Hardy space $H^p(\rn)$ for $0<p\le 1$. A different proof was given by Taibleson and Weiss \cite{Ta_We}. 
It turns out that the condition $s>n/\min{(1,p)}-n/2$ is optimal for   boundedness to hold and it is natural to ask whether (\ref{hocondition}) can be weakened. 
Baernstein and Sawyer \cite{Ba_Sa} obtained endpoint $H^p(\rn)$ estimates by using Herz space conditions for $\big( \sigma(2^j\cdot)\widehat{\Psi} \big)^{\vee}$. An endpoint $H^1-L^{1,2}$ estimate involving Besov space was given by Seeger \cite{Se1, Se2} and  these estimates were improved and extended to Triebel-Lizorkin spaces by Seeger \cite{Se3} and Park \cite{Park}.
Using an interpolation method, Calder\'on and Torchinsky \cite{Ca_To} 
obtained $L^p$-boundedness  for $T_\sigma$ in the range $|1/p-1/2|<s/n$ ($1<p<\infty$),   
replacing $L_s^2(A_0)$ in (\ref{hocondition}) by the $L^r$-based Sobolev space $L_s^r(A_0)$ for some  $r>n/s$; this   was revisited by Grafakos, He, Honz\'ik, and Nguyen \cite{Gr_He_Ho_Ng} who provided counterexamples 
indicating the optimality of the range of $p$'s. 
 Recently, Grafakos and Slav\'ikov\'a \cite{Gr_Sl} improved this result, replacing (\ref{hocondition}) by 
\begin{equation*}
\sup_{j\in\zz}{\big\Vert \sigma(2^j\cdot)\widehat{\Psi}\big\Vert_{L^{n/s,1}_s(A_0)}}<\infty
\end{equation*}
where $L_s^{n/s,1} $ is a Lorentz-type Sobolev space (defined in \eqref{LorentzSobolev}).  This 
formulation  eliminated the need for the index $r$.

Before stating our  results, we   recall the definition of Lorentz spaces $L^{p,q}(\rn)$ and Lorentz-Sobolev spaces $L^{p,q}_s(\rn)$.
For any measurable function $f$ defined on $\rn$, the decreasing rearrangement of $f$ is defined by
\begin{equation*}
f^*(t):=\inf\big\{s>0: d_f(s)  \le t \big\}, \qquad  t>0
\end{equation*} where $d_f(s):=\big| \{x\in\rn:|f(x)|>s\}\big|$. Here we adopt the convention that the infimum of the empty set is $\infty$.
Then for $0<p,q\le \infty$ we define
\begin{equation*}
\Vert f\Vert_{L^{p,q}(\rn)}:=\begin{cases}
\displaystyle \Big(\int_0^{\infty}{\big(t^{1/p}f^*(t) \big)^{q}}\frac{dt}{t} \Big)^{1/q}, & q<\infty\\
\qquad \displaystyle\sup_{t>0}{t^{1/p}f^*(t)}, & q=\infty.
\end{cases}
\end{equation*} The set of all $f$ with $\Vert f\Vert_{L^{p,q}(\rn)}<\infty$ is called the Lorentz space 
and is denoted by $L^{p,q}(\rn)$.
For $s>0$ let $(I-\Delta)^{s/2}$ be the inhomogeneous fractional Laplacian operator, defined by
\begin{equation*}
(I-\Delta)^{s/2}f:=\big( (1+4\pi^2|\cdot|^2)^{s/2}\widehat{f}\big)^{\vee}.
\end{equation*} 
Then for $0<p,q\le \infty$ and $s>0$ let
\begin{equation}\label{LorentzSobolev}
\Vert f\Vert_{L^{p,q}_s(\rn)}:=\big\Vert (I-\Delta)^{s/2}f\big\Vert_{L^{p,q}(\rn)}.
\end{equation}

\begin{customthm}{A}\cite{Gr_Sl}\label{knownresult}
Let $1<p<\infty$ and $0<s<n$ satisfy
\begin{equation}\label{conditions}
s>\big| n/p-n/2\big|.
\end{equation}
Then there exists $C>0$ such that
\begin{equation*}
\Vert T_{\sigma}f\Vert_{L^p(\rn)}\le C \sup_{j\in \zz}{\big\Vert \sigma(2^j\cdot)\widehat{\Psi}\big\Vert_{L_s^{n/s,1}(\rn)}}\Vert f\Vert_{L^p(\rn)}.
\end{equation*}
\end{customthm}
Moreover, a counterexample showing   that condition (\ref{conditions}) is optimal  can be found in Slav\'ikov\'a \cite{Sl}; this means that $L^p$ boundedness could fail on the line $\big| n/p-n/2\big|=s$.

The purpose of this paper is to extend Theorem A to   Hardy spaces $H^p(\rn)$ for $0<p<\infty$.
Let $\Phi$ be a Schwartz function satisfying $\int_{\rn}{\Phi(x)}dx=1$ and $\textup{Supp}(\widehat{\Phi})\subset \{\xi\in\rn : |\xi|\le 2\}$, and  $\Phi_k:=2^{kn}\Phi(2^k\cdot)$. 
We define $H^p(\rn)$ to be the collection of all tempered distributions $f$ satisfying
\begin{equation*}
\Vert f\Vert_{H^p(\rn)}:=\big\Vert \sup_{k\in\zz}{| \Phi_k\ast f|}\big\Vert_{L^p(\rn)}<\infty.
\end{equation*}
Throughout this paper we fix the index
\begin{equation*}
\tau^{(s,p)}:=\frac{n}{s-(n/\min{(1,p)}-n)}.
\end{equation*}
The first main results of this paper is the following:
\begin{theorem}\label{mainpositive}
Let $0<p<\infty$ and $0<s<n/\min{(1,p)}$ satisfy $(\ref{conditions})$.
Then there exists $C>0$ such that
\begin{equation}\label{mainresultest}
\Vert T_{\sigma}f\Vert_{H^p(\rn)}\le C\sup_{j\in\zz}{\big\Vert  \sigma(2^j\cdot)\widehat{\Psi} \big\Vert_{L_{s}^{\tau^{(s,p)},\min{(1,p)}}(\rn)}}\Vert f\Vert_{H^p(\rn)}.
\end{equation}
\end{theorem}

The above theorem coincides with Theorem \ref{knownresult} if $1<p<\infty$ because $H^p(\rn)=L^p(\rn)$ for $1<p<\infty$, and so we   mainly deal with the case $0<p\le 1$ in the paper. However, a complex interpolation argument between $H^1$- and $L^2$-boundedness yields the result for $1<p<2$; this recovers Theorem \ref{knownresult} by a duality argument, as our proof for $0<p\le 1$ is in fact independent of that of Theorem \ref{knownresult}. 
We provide a sketch of this in the appendix. Actually the construction of analytic family of operators and  interpolation techniques are very similar to those used in \cite{Gr_Sl}. 

\begin{remark}
As a result of Baernstein and Sawyer \cite[Corollary 1 (Chapter 3)]{Ba_Sa}, for $0<p<1$ and $s\ge n/p-n/2$ we have
\begin{equation}\label{basaresult}
\Vert T_{\sigma}f\Vert_{H^p(\rn)}\lesssim \sup_{j\in\zz}{\big\Vert \sigma(2^j\cdot)\wh{\Psi}\big\Vert_{B_{\tau^{(s,p)}}^{s,p}(\rn)}}\Vert f\Vert_{H^p(\rn)}
\end{equation} where $\Psi_k:=2^{kn}\Psi(2^k\cdot)$ and $B_p^{s,q}(\rn)$ is the Besov space with (quasi-)norms
\begin{equation*}
\Vert g\Vert_{B_p^{s,q}(\rn)}:=\Vert \Phi \ast g\Vert_{L^p(\rn)}+\Big(\sum_{k=1}^{\infty}{2^{skq}\big\Vert \Psi_k\ast g\big\Vert_{L^p(\rn)}^q} \Big)^{1/q}.
\end{equation*}  
Then the case $0<p<1$ in (\ref{mainresultest}) could be also obtained as a consequence of (\ref{basaresult}) and the embedding 
\begin{equation}\label{setrresult}
B_{\tau^{(s_0,p)}}^{s_0,p}(\rn)\hookrightarrow L_{s_1}^{\tau^{s_1,p},p}(\rn)\hookrightarrow B_{\tau^{(s_2,p)}}^{s_2,p}(\rn),\quad s_2<s_1<s_0~ \text{ and }~ \tau^{(s_1,p)}>1,
\end{equation} which follows from the recent generalization of the Franke-Jawerth embedding theorem for Triebel-Lizorkin-Lorentz spaces of Seeger and Trebels \cite{Se_Tr}.
Conversely, our result also implies (\ref{basaresult}) for $s>n/p-n/2$ via the embedding (\ref{setrresult}) as Theorem \ref{mainpositive} will be proved in a different way, based on the Littlewood-Paley theory for Hardy spaces and some inequalities in Lorentz spaces. 
We note that when $s=n/p-n/2$, (\ref{basaresult}) holds while (\ref{mainresultest}) fails as mentioned below.

On the other hand, a certain weight condition is required in \cite{Ba_Sa} when we extend (\ref{basaresult}) to $H^1$-boundedness. To be specific, we have
\begin{equation}\label{h1boundresult}
\Vert T_{\sigma}f\Vert_{H^1(\rn)}\lesssim \sup_{j\in\zz}{\big\Vert \sigma(2^j\cdot)\wh{\Psi}\big\Vert_{B_{n/s}^{s,1}(\omega)}}\Vert f\Vert_{H^1(\rn)}, \quad s\ge n/2
\end{equation} where $\{\omega(k)^{-1}\}_{k\in\mathbb{N}}\in \ell^2$ and
\begin{equation*}
\Vert g\Vert_{B_{n/s}^{s,1}(\omega)}:=\Vert \Phi \ast g\Vert_{L^{n/s}(\rn)}+\sum_{k=1}^{\infty}{\omega(k)2^{sk}\big\Vert \Psi_k\ast g\big\Vert_{L^{n/s}(\rn)}}.
\end{equation*}
However, a sharp endpoint $H^1$- boundedness holds   using Lorentz-Sobolev conditions without weights in Theorem \ref{mainpositive}.
This, combined with the embedding (\ref{setrresult}), improves (\ref{h1boundresult}) by replacing $B_{n/s}^{s,1}(\omega)$ by $B_{n/s}^{s,1}$ for $s>n/2$. 
When $s=n/2$, the optimality of $\{\omega(k)^{-1}\}_{k\in\mathbb{N}}\in \ell^2$ for (\ref{h1boundresult}) remains open, but it is known in Park \cite[Theorem 3.4]{Park} that $B_{2}^{n/2,1}(\omega)$ in (\ref{h1boundresult}) cannot be replaced by $B_{2}^{n/2,1}$.

\end{remark}

We now turn our attention to the sharpness of Theorem~\ref{mainpositive}. 
We point out that the example of Slav\'ikov\'a \cite{Sl} is still applicable to the case $0<p\le 1$ with the dilation property $\Vert f(\epsilon \cdot)\Vert_{H^p(\rn)}=\epsilon^{-n/p}\Vert f\Vert_{H^p(\rn)}$, and therefore (\ref{conditions}) is sharp in Theorem \ref{mainpositive}.
We now consider the optimality of different parameters. 
Note that for $0<r_1< r_2<\infty$ and $0<q_1,q_2\le \infty$
\begin{equation}\label{lorentzembedding}
\big\Vert \sigma(2^j\cdot)\widehat{\Psi}\big\Vert_{L_s^{r_1,q_1}(\rn)}\lesssim \big\Vert \sigma(2^j\cdot)\widehat{\Psi}\big\Vert_{L_{s}^{r_2,q_2}(\rn)} \quad \text{~uniformly in }~ j,
\end{equation}
which follows from the H\"older inequality with even integers $s$, complex interpolation technique, and a proper embedding theorem. 
Moreover, if $q_1\ge q_2$, then the embedding $L_s^{r,q_2}(\rn) \hookrightarrow L_s^{r,q_1}(\rn)$ yields that 
\begin{equation*}
\big\Vert \sigma(2^j\cdot)\widehat{\Psi}\big\Vert_{L_s^{r,q_1}(\rn)}\lesssim \big\Vert \sigma(2^j\cdot)\widehat{\Psi}\big\Vert_{L_{s}^{r,q_2}(\rn)} \quad \text{~uniformly in }~ j. 
\end{equation*}
Consequently, we may replace $L_s^{\tau^{(s,p)},\min{(1,p)}}(\rn)$ in Theorem \ref{mainpositive} by $L_s^{r,q}(\rn)$ for $r> \tau^{(s,p)}$  and $0<q\le \infty$, or by $L_s^{\tau^{(s,p)},q}(\rn)$ for $0<q< \min{(1,p)}$.

The second main result of this paper is the sharpness of the parameters $ \tau^{(s,p)}$ and 
$ \min(1,p)$. That is,  Theorem \ref{mainpositive} is sharp in the sense that $\tau^{(s,p)}$ cannot be replaced by any smaller number $r$, and if $r=\tau^{(s,p)}$, then $\min{(1,p)}$ cannot be replaced 
by any larger number $q$.
\begin{theorem}\label{mainnegative}
Let $0<p<\infty$ and $|n/p-n/2|<s<n/\min{(1,p)}$.
\begin{enumerate}
\item For any $0<r<\tau^{(s,p)}$ and $0<q\le \infty$, there exists a function $\sigma$ that satisfies
\begin{equation*}
\sup_{j\in\zz}{\big\Vert  \sigma(2^j\cdot)\widehat{\Psi} \big\Vert_{L_{s}^{r,q}(\rn)}}<\infty 
\end{equation*}
such that $T_{\sigma}$ is unbounded on $H^p(\rn)$.
\item For any $q>\min{(1,p)}$, there exists a function $\sigma$ that satisfies
\begin{equation*}
\sup_{j\in\zz}{\big\Vert  \sigma(2^j\cdot)\widehat{\Psi} \big\Vert_{L_{s}^{\tau^{(s,p)},q}(\rn)}}<\infty
\end{equation*}
such that $T_{\sigma}$ is unbounded on $H^p(\rn)$.

\end{enumerate}

\end{theorem}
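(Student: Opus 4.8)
The strategy is to construct, for each prescribed Lorentz–Sobolev bound that is strictly weaker than the sharp one in Theorem \ref{mainpositive}, an explicit multiplier supported (after dilation) in $A_0$ whose operator $T_\sigma$ fails to be bounded on $H^p$. The natural building block is a lacunary sum $\sigma(\xi)=\sum_{k} c_k \, m(2^k\xi)$, or a single smooth bump modulated by a high-frequency oscillation $e^{2\pi i \langle x_0, \xi\rangle}$ on a small ball, since modulation translates the kernel and costs nothing in any translation-invariant Sobolev norm but multiplies the $L^p$-type norm of the kernel in a controlled way. Concretely, I would take a fixed smooth $\phi$ with $\widehat{\phi}$ supported in $A_0$ and set $\sigma$ to be a superposition of rescaled, translated copies, or — following the template of Slav\'ikov\'a \cite{Sl} and of \cite{Gr_Sl} — a self-similar construction $\sigma(\xi) = \sum_{\ell \ge 1} a_\ell\, \widehat{\Psi}(\xi)\, \Theta(R_\ell \xi)$ where $\Theta$ is a fixed bump on a tiny ball, $R_\ell \to \infty$ lacunarily, and the amplitudes $a_\ell$ are chosen as large as the constraint $\sup_j \|\sigma(2^j\cdot)\widehat{\Psi}\|_{L^{r,q}_s} < \infty$ permits.

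The computational heart is a precise two-sided estimate: for a single bump of the form $\eta(\xi)=\widehat{\Psi}(\xi)\Theta(R\xi)$ (so $\widehat\Theta$ concentrated on a ball of radius $\sim R^{-1}$ around a unit vector, after translating by a large $x_0$), one needs
\begin{equation*}
\big\| \eta \big\|_{L^{r,q}_s(\rn)} \sim R^{\,s - n/r} \qquad \text{and} \qquad \big\| T_{\eta} \big\|_{H^p \to H^p} \gtrsim R^{\,s_0 } \ \text{ for the critical } s_0 = \tfrac{n}{\min(1,p)} - \tfrac n2 ,
\end{equation*}
wait — more carefully, the lower bound for the operator norm should be read off from testing $T_\eta$ against an $H^p$ atom adapted to the ball of radius $R^{-1}$ and using the non-cancellation/decay of the kernel; this is exactly where the exponent $n/\min(1,p)-n$ enters, and where one sees that the borderline amplitude is $a_\ell = R_\ell^{\,-(s - (n/\min(1,p)-n))} = R_\ell^{-n/\tau^{(s,p)}}$ up to the $\min(1,p)$-summability. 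For part (1), choosing $r < \tau^{(s,p)}$ means $R^{s-n/r}$ is summable (in $\ell^q$, any $q$) for amplitudes that still force $\sum a_\ell^{?}$ to diverge in the $H^p$ operator-norm estimate, so $T_\sigma$ is unbounded; for part (2), with $r=\tau^{(s,p)}$ exactly, the Lorentz index $q$ is the only remaining degree of freedom, and a sequence $a_\ell$ lying in $\ell^q$ but not in $\ell^{\min(1,p)}$ (possible precisely when $q>\min(1,p)$) yields finiteness of the Lorentz–Sobolev norm but a divergent $H^p$ bound. The key arithmetic identity $s - n/\tau^{(s,p)} = n/\min(1,p)-n$ must be verified and exploited throughout; it is what makes the borderline scalings line up.

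The main obstacle, I expect, is the \emph{lower bound} $\|T_\sigma\|_{H^p\to H^p} = \infty$: one must produce a single test function (or a sequence of normalized atoms) on which the Hardy-space norm of $T_\sigma f$ blows up, and the grand maximal function / atomic characterization of $H^p$ for $p\le 1$ makes such lower bounds delicate because cancellation can cause unexpected decay. I would handle this by testing on a carefully localized atom $a$ supported in a small ball $B$, computing $T_\sigma a$ explicitly as a sum of translated dilates of a fixed Schwartz kernel $K$, and using that these translates are essentially disjointly supported (by the lacunarity of $R_\ell$ and the choice of translation vectors $x_0^{(\ell)}$) so that $\|\sup_k |\Phi_k * T_\sigma a|\|_{L^p}^p \gtrsim \sum_\ell a_\ell^p \cdot (\text{size of }\ell\text{th piece})^p$, which diverges by the choice of amplitudes. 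A secondary technical point is verifying the \emph{upper} bound $\sup_j\|\sigma(2^j\cdot)\widehat\Psi\|_{L^{r,q}_s}<\infty$: because $\sigma$ is a lacunary sum, for each fixed $j$ only $O(1)$ terms survive the cutoff $\widehat\Psi$, so this reduces to the single-bump estimate $\|\eta\|_{L^{r,q}_s}\sim R^{s-n/r}$ together with $\ell^q$-summability of $(a_\ell R_\ell^{s-n/r})_\ell$; the Lorentz norm of a bump on a ball of radius $R^{-1}$ composed with $(I-\Delta)^{s/2}$ is standard but must be done with the rearrangement definition to get the sharp $q$-dependence in part (2). I would also remark that, as in \cite{Sl, Gr_Sl}, the dilation invariance $\|f(\epsilon\cdot)\|_{H^p}=\epsilon^{-n/p}\|f\|_{H^p}$ lets one reduce various normalizations, and that the case $p>1$ of the theorem follows from the $L^p$ counterexample of Slav\'ikov\'a \cite{Sl} since $H^p=L^p$ there, so the genuinely new work is confined to $0<p\le 1$.
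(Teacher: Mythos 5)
Your plan and the paper's proof diverge at the level of the basic construction, and the point where your plan is incomplete is precisely the point where the paper's construction is designed to be trivial. You propose a lacunary superposition $\sigma(\xi)=\sum_{\ell} a_\ell\,\widehat{\Psi}(\xi)\,\Theta(R_\ell\xi)$ of bumps at scales $R_\ell\to\infty$, and you want the condition $\sup_j\|\sigma(2^j\cdot)\widehat{\Psi}\|_{L^{r,q}_s}<\infty$ to be equivalent (up to constants) to $(\sum_\ell(a_\ell R_\ell^{\,s-n/r})^q)^{1/q}<\infty$, i.e., you treat the $L^{r,q}$ Lorentz norm of the superposition as $\ell^q$--decomposable over the pieces. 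That is the gap: Lorentz spaces $L^{r,q}$ are $r$--decomposable (not $q$--decomposable) on disjoint supports, and worse, after applying $(I-\Delta)^{s/2}$ the pieces are no longer even disjointly supported in $\xi$ (the operator is a Fourier multiplier on $\sigma$, so it preserves the support of $\widehat\sigma$, not of $\sigma$). Getting the genuine $q$--dependence out of a superposition requires matching the \emph{envelope} of the rearrangement of $\sum_\ell a_\ell(I-\Delta)^{s/2}(\widehat\Psi\,\Theta(R_\ell\cdot))$ to a power--times--log profile, and nothing in your sketch does that; as written, the claim ``$a_\ell\in\ell^q\setminus\ell^{\min(1,p)}$ gives a finite Lorentz--Sobolev norm and an infinite $H^p$ norm'' is not justified. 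Your key arithmetic identity $s-n/\tau^{(s,p)}=n/\min(1,p)-n$ is correct and is indeed what drives the borderline, and your intuition that the obstruction is the lower bound is reasonable, but in this problem it is actually the \emph{upper} bound (the rearrangement of a sum) that breaks your scheme.

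The paper takes a cleaner route. Instead of a superposition, it builds a single modulated kernel $K^{(t,\gamma)}(x)=\big(\mathcal{H}^{(t,\gamma)}\ast\widetilde\eta\big)(x)\,e^{2\pi i\langle x,e_1\rangle}$ with $\mathcal{H}^{(t,\gamma)}(x)=(1+4\pi^2|x|^2)^{-t/2}\big(1+\ln(1+4\pi^2|x|^2)\big)^{-\gamma/2}$ and sets $\sigma^{(t,\gamma)}=\widehat{K^{(t,\gamma)}}$. Because $\widehat{\widetilde\eta}$ has tiny compact support, $\sigma(2^j\cdot)\widehat\Psi$ vanishes for $|j|\ge 3$, so the supremum over $j$ is trivial. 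The core is Lemma~\ref{keylemma}: $\widehat{\mathcal{H}^{(s,\gamma)}}$ is positive radial, decays exponentially for $|\xi|\ge 1$, and behaves like $|\xi|^{-(n-s)}(1+2\ln|\xi|^{-1})^{-\gamma/2}$ for $|\xi|\le1$; combined with the radial rearrangement Lemma~\ref{radialrearrangement}, the $L^{r,q}_s$ norm reduces to an explicit one--dimensional integral $\int_1^\infty u^{(n-t+s-n/r)q}(1+2\ln u)^{-\gamma q/2}\,du/u$ whose convergence is exactly the $\gamma q/2>1$ condition at the critical power. The $H^p$ lower bound is obtained by testing on a single modulated Schwartz bump and exploiting the super-multiplicativity $\mathcal{H}^{(t,\gamma)}(x-y)\ge\mathcal{H}^{(t,\gamma)}(x)\mathcal{H}^{(t,\gamma)}(y)$ to get $\|T_\sigma\|_{H^p\to H^p}\gtrsim\|\mathcal{H}^{(t,\gamma)}\|_{L^{\min(1,p)}}$, which diverges iff $t\min(1,p)<n$ or ($t\min(1,p)=n$ and $\gamma\min(1,p)\le 2$). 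The sharpness in $q$ then falls out of the gap between the two logarithmic thresholds $2/q<\gamma\le 2/\min(1,p)$. In short: the logarithmic perturbation of the Bessel kernel plays exactly the role you wanted the $\ell^q$--summability of amplitudes to play, but with an explicit and computable rearrangement, and the lower bound is a one-function estimate rather than a sum over atoms.
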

We remark that the first assertion follows immediately from the second one which is the end-point case $r=\tau^{(s,p)}$, thanks to (\ref{lorentzembedding}). Therefore, only the second statement will be considered in the proof of Theorem \ref{mainnegative}.

The paper is organized as follows. Section \ref{preliminary} is dedicated to preliminaries, mostly extensions of inequalities in Lebesgue spaces to Lorentz spaces thanks to a real interpolation technique. We address the case $0<p\le 1$ of Theorem \ref{mainpositive} in Section \ref{proofpositive} and the proof of Theorem \ref{mainnegative} is given in Section \ref{proofnegative}. In the appendix, a complex interpolation method is   discussed whose purpose is to establish the $L^p$-boundedness for $1<p<2$.

\section{Preliminaries}\label{preliminary}

The Lorentz spaces are generalization of Lebesgue spaces and occur as intermediate spaces for the real interpolation, the so called $K$-method.
For $0<p,p_0,p_1<\infty$, $0<r\le \infty$, and $0<\theta<1$ satisfying $p_0\not= p_1$ and $1/p=(1-\theta)/p_0+\theta/ p_1$, we have 
\begin{equation}\label{lebesgueinter}
(L^{p_0}(\rn),L^{p_1}(\rn))_{\theta,r}=L^{p,r}(\rn).
\end{equation}
This remains valid for vector-valued spaces.
For $0<p,p_0,p_1<\infty$, $0<q,r\le \infty$, and $0<\theta<1$ satisfying $p_0\not= p_1$ and $1/p=(1-\theta)/p_0+\theta/ p_1$,
\begin{equation}\label{vectorinter}
\big(L^{p_0}(\ell^q),L^{p_1}(\ell^q)\big)_{\theta,r}=L^{p,r}(\ell^q), \quad \big(\ell^q(L^{p_0}),\ell^q(L^{p_1})\big)_{\theta,r}=\ell^q(L^{p,r}).
\end{equation}
We remark that $\big((L^{p_0}(\ell^{q_0}),L^{p_1}(\ell^{q_1})\big)_{\theta,r}\not=L^{p,r}(\ell^q)$, $\big(\ell^{q_0}(L^{p_0}),\ell^{q_1}(L^{p_1})\big)_{\theta,r}\not=\ell^q(L^{p,r}) $ for $q_0\not= q_1$ with $1/q=(1-\theta)/q_0+\theta/q_1$.
See \cite{Be_Sh, Be_Lo, Cw, Fe_Ri_Sa} for more details.

Then many inequalities in Lebesgue spaces can be extended to Lorentz spaces from the following real interpolation method; see \cite{Be_Sh, Be_Lo, Fe_Ri_Sa, Hol}.
\begin{customproposition}{B}\label{interpolation}
Let $\mathcal{A}$ and $\mathcal{B}$ be two topological vector spaces.
Suppose $(A_0, A_1)$ and $(B_0,B_1)$ be couples of quasi-normed spaces continuously embedded into $\mathcal{A}$ and $\mathcal{B}$, respectively.
Let $0<\theta<1$ and $0<r\le \infty$.
If $T$ is a linear operator such that 
\begin{equation*}
T: A_0 \to B_0, \qquad T: A_1\to B_1,
\end{equation*} with the quasi-norms $M_0$ and $M_1$, respectively, then
\begin{equation*}
T:(A_0,A_1)_{\theta,r}\to (B_0,B_1)_{\theta,r}
\end{equation*} is also continuous, and for its quasi-norm we have 
\begin{equation*}
\Vert T\Vert_{(A_0,A_1)_{\theta,r}\to (B_0,B_1)_{\theta,r}}\le M_0^{1-\theta}M_1^{\theta}.
\end{equation*}
\end{customproposition}

We apply   Proposition \ref{interpolation}   to extend Young's inequality, the Hausdorff-Young inequality, Minkowski's  inequality,  and the Kato-Ponce   inequality to Lorentz spaces.

\begin{lemma}\label{young}
Let $1<p\le r<\infty$, $1\le q<r$, and $0<t\le \infty$ satisfy $1/r+1=1/p+1/q$.
Then
\begin{equation*}
\Vert f\ast g\Vert_{L^{r,t}(\rn)}\le \Vert f\Vert_{L^{p,t}(\rn)} \Vert g\Vert_{L^q(\rn)}
\end{equation*}
for all $f,g\in \mathscr S(\rn)$.
\end{lemma}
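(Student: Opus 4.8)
The plan is to derive the estimate from the classical (Lebesgue-space) Young inequality by real interpolation, via Proposition B, fixing $g$ and viewing convolution as a linear operator in $f$. First I would fix $g \in \mathscr{S}(\rn)$ and set $S_g f := f \ast g$. The goal is to identify two endpoint Lebesgue mappings for $S_g$ whose real interpolation with parameter $t$ produces the target Lorentz estimate. The natural choice, given that the Lorentz index on the left and right is the same $t$, is to keep $g$ in the (non-interpolated) space $L^q$ and interpolate in the $f$-variable between two Lebesgue exponents $p_0 < p < p_1$ satisfying the Young scaling relation $1/r_i + 1 = 1/p_i + 1/q$. Since $1 < p \le r < \infty$ and $1 \le q < r$, one has $1/p = 1 + 1/q - 1/r \in (0,1)$ with room on both sides, so such $p_0, p_1$ exist in $(1,\infty)$ (taking them close enough to $p$), and then $r_0, r_1 \in (1,\infty)$ are determined by the same relation. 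Classical Young gives $S_g : L^{p_i}(\rn) \to L^{r_i}(\rn)$ with operator norm at most $\Vert g\Vert_{L^q(\rn)}$ for $i = 0,1$.

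Next I would choose $\theta \in (0,1)$ so that $1/p = (1-\theta)/p_0 + \theta/p_1$. A short computation using the relation $1/r_i - 1/p_i = 1 - 1/q$ (independent of $i$) shows that the same $\theta$ also satisfies $1/r = (1-\theta)/r_0 + \theta/r_1$; this is the key algebraic compatibility that makes the scheme work. Then by \eqref{lebesgueinter},
\[
(L^{p_0}(\rn), L^{p_1}(\rn))_{\theta, t} = L^{p,t}(\rn), \qquad (L^{r_0}(\rn), L^{r_1}(\rn))_{\theta, t} = L^{r,t}(\rn),
\]
and Proposition B yields $S_g : L^{p,t}(\rn) \to L^{r,t}(\rn)$ with
\[
\Vert S_g\Vert_{L^{p,t} \to L^{r,t}} \le \big(\Vert g\Vert_{L^q}\big)^{1-\theta}\big(\Vert g\Vert_{L^q}\big)^{\theta} = \Vert g\Vert_{L^q(\rn)},
\]
which is exactly the claimed inequality. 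One small point to verify is that the interpolation couples embed continuously into a common Hausdorff topological vector space (one may take $\mathcal{A} = \mathcal{B} = \mathscr{S}'(\rn)$, or locally integrable functions with the topology of convergence in measure on sets of finite measure), so that Proposition B applies; this is routine.

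The only genuine obstacle — and it is minor — is the edge behavior of the exponents: I must make sure the auxiliary exponents $p_0, p_1$ can be chosen strictly inside $(1,\infty)$ and distinct, and correspondingly $r_0 \ne r_1$, so that \eqref{lebesgueinter} is applicable (it requires $p_0 \ne p_1$ and all exponents finite). Since the hypotheses force $1/p \in (\max(0, 1/q), 1)$ strictly and $1/r = 1/p - (1 - 1/q) \in (0,1)$ strictly, there is an open neighborhood of admissible perturbations, so this causes no difficulty; the scaling line $1/r + 1 = 1/p + 1/q$ is preserved automatically because it is affine. No nontrivial estimate beyond classical Young is needed — the entire content is the interpolation bookkeeping.
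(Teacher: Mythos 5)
Your proposal is correct and takes essentially the same route as the paper: fix $g$, view $f \mapsto f*g$ as a linear operator in $f$, and interpolate two Lebesgue-space Young inequalities via Proposition B and \eqref{lebesgueinter}. The only cosmetic difference is that the paper picks the lower endpoint to be $L^1 \to L^q$ (i.e.\ $p_0=1$, $r_0=q$) while you perturb to strictly interior exponents $p_0, p_1 \in (1,\infty)$; both choices satisfy the compatibility $1/p=(1-\theta)/p_0+\theta/p_1$ and $1/r=(1-\theta)/r_0+\theta/r_1$ for the same $\theta$ and yield the claimed bound.
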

\begin{proof}
For a fixed $g\in \mathscr S(\rn)$, we define the linear operator $T_g$  by 
\begin{equation*}
T_gf:=f\ast g.
\end{equation*}
Choose $r_1$, $\theta$, and $p_1$ such that $r<r_1<\infty$, $0<\theta<1$, $p<p_1<\infty$, $1/r=(1-\theta)/q+\theta/r_1$, and $1/r_1+1=1/p_1+1/q$. Then note that $1/p=1-\theta+\theta/p_1$.
By using Young inequality, we obtain that
\begin{equation*}
\Vert T_gf\Vert_{L^q(\rn)}\le \Vert g\Vert_{L^q}\Vert f\Vert_{L^1(\rn)}
\end{equation*}
and
\begin{equation*}
\Vert T_gf\Vert_{L^{r_1}(\rn)}\le \Vert g\Vert_{L^q}\Vert f\Vert_{L^{p_1}(\rn)}.
\end{equation*}
Then Proposition \ref{interpolation} with (\ref{lebesgueinter}) completes the proof.
\end{proof}

\begin{lemma}\label{hausyoung}
Let $2<p<\infty$ and $0<r\le \infty$.
Then 
\begin{equation*}
\Vert \widehat{f}\Vert_{L^{p,r}(\rn)}\le \Vert f\Vert_{L^{p',r}(\rn)}
\end{equation*}
where $1/p+1/p'=1$.

\end{lemma}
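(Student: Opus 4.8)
The plan is to deduce this as a real-interpolation consequence of the two classical endpoint mapping properties of the Fourier transform, via Proposition \ref{interpolation}. Recall that $\mathcal F\colon f\mapsto \widehat f$ maps $L^1(\rn)$ into $L^\infty(\rn)$ with norm at most $1$, by the elementary bound $\|\widehat f\|_{L^\infty}\le \|f\|_{L^1}$, and maps $L^2(\rn)$ into $L^2(\rn)$ with norm $1$, by Plancherel's theorem. Since $2<p<\infty$, I would set $\theta:=2/p\in(0,1)$, so that $1/p=(1-\theta)/\infty+\theta/2=\theta/2$ and $1/p'=1-1/p=(1-\theta)/1+\theta/2$.

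Next I would apply Proposition \ref{interpolation} with $\mathcal A=\mathcal B=\mathscr S'(\rn)$, with the couples $(A_0,A_1)=(L^1(\rn),L^2(\rn))$ and $(B_0,B_1)=(L^\infty(\rn),L^2(\rn))$, and with $T=\mathcal F$. This yields the boundedness of $\mathcal F\colon (L^1(\rn),L^2(\rn))_{\theta,r}\to (L^\infty(\rn),L^2(\rn))_{\theta,r}$ with quasi-norm at most $1^{1-\theta}\cdot 1^{\theta}=1$.

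It then remains only to identify the two interpolation spaces. By \eqref{lebesgueinter}, since the endpoint exponents $1$ and $2$ are both finite and $1/p'=(1-\theta)/1+\theta/2$, one has $(L^1(\rn),L^2(\rn))_{\theta,r}=L^{p',r}(\rn)$. For the target, the analogous identity $(L^\infty(\rn),L^2(\rn))_{\theta,r}=L^{p,r}(\rn)$ with $1/p=\theta/2$ is the classical description of real-interpolation spaces having an $L^\infty$ endpoint; it is not literally subsumed by \eqref{lebesgueinter} as stated there, but it is standard (see \cite{Be_Sh, Be_Lo, Fe_Ri_Sa}) and follows, for instance, from the explicit formula for $K(t,f;L^\infty(\rn),L^2(\rn))$ in terms of the decreasing rearrangement of $f$. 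Combining these two identifications gives $\|\widehat f\|_{L^{p,r}(\rn)}\le \|f\|_{L^{p',r}(\rn)}$, as claimed.

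The only point requiring care is the identification of the target space, since the displayed formula \eqref{lebesgueinter} is stated only for finite exponents; this is, however, a well-documented fact about interpolation against $L^\infty$ and presents no genuine obstacle. The remainder is a direct application of Proposition \ref{interpolation}, exactly as in the proof of Lemma \ref{young}.
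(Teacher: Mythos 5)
Your argument is correct, but it takes a slightly different route from the paper. The paper's proof also proceeds by Proposition~\ref{interpolation}, but it interpolates between two instances of the Hausdorff--Young inequality at intermediate exponents: pick $2<p_0<p<p_1<\infty$ with $1/p=(1-\theta)/p_0+\theta/p_1$, apply $\mathcal F\colon L^{p_i'}\to L^{p_i}$ for $i=0,1$, and then identify both intermediate spaces via \eqref{lebesgueinter}, which applies directly because all four exponents $p_0,p_1,p_0',p_1'$ are finite (and $1/p'=(1-\theta)/p_0'+\theta/p_1'$ by conjugate duality). You instead use the two extreme endpoints $\mathcal F\colon L^1\to L^\infty$ and $\mathcal F\colon L^2\to L^2$. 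Your version is more elementary in that it avoids invoking Riesz--Thorin (or the full Hausdorff--Young scale) at intermediate exponents and uses only trivial $L^1\to L^\infty$ and Plancherel, but, as you correctly flag, it forces the target couple to have an $L^\infty$ endpoint, so the identification $(L^\infty,L^2)_{\theta,r}=L^{p,r}$ is needed, which \eqref{lebesgueinter} does not literally state. That identification is indeed standard, so your proof is sound; the paper's choice of endpoints simply keeps the entire argument inside the explicitly recorded formula \eqref{lebesgueinter} at the small cost of invoking the nontrivial Hausdorff--Young inequality twice.
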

\begin{proof}
It follows immediately from Hausdorff-Young inequality and Proposition \ref{interpolation} with (\ref{lebesgueinter}).
\end{proof}

\begin{lemma}\label{katoponce}
Let $1<p<\infty$, $0<r\le \infty$, and $s>0$. For any $\vartheta\in \mathscr S(\rn)$, we have
\begin{equation}\label{katoponcestate}
\Vert  \vartheta\cdot f \Vert_{L^{p,r}_{s}(\rn)}\lesssim_{n,s,p,r,\vartheta} \Vert f\Vert_{L_s^{p,r}(\rn)}.
\end{equation}

\end{lemma}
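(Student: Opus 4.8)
The plan is to reduce the Lorentz-Sobolev product estimate \eqref{katoponcestate} to a known Lorentz-space product estimate by first disposing of the fractional Laplacian. Write $F := (I-\Delta)^{s/2}f$, so that $\|f\|_{L^{p,r}_s} = \|F\|_{L^{p,r}}$, and the task becomes bounding $\|(I-\Delta)^{s/2}(\vartheta f)\|_{L^{p,r}}$ in terms of $\|F\|_{L^{p,r}}$. The natural route is a Kato--Ponce (fractional Leibniz) type decomposition: one writes $(I-\Delta)^{s/2}(\vartheta f)$ as a sum of a ``main term'' in which the full derivative falls on $f$, namely something comparable to $\vartheta \cdot (I-\Delta)^{s/2}f = \vartheta F$, plus ``commutator/remainder terms'' in which at least some derivatives fall on the Schwartz function $\vartheta$. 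Since $\vartheta \in \mathscr S(\rn)$, every term of the second type carries a convolution against a Schwartz kernel (or a rapidly decaying multiplier acting on $F$), and those can be controlled by $\|F\|_{L^{p,r}}$ using the Lorentz-space Young inequality (Lemma~\ref{young}) or the boundedness of Schwartz-kernel convolution operators on $L^{p,r}$; the main term $\vartheta F$ is trivially bounded by $\|\vartheta\|_{L^\infty}\|F\|_{L^{p,r}}$.

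Concretely, I would first treat the case $1<p<\infty$, $r=p$ (that is, the classical $L^p_s$ statement), which is exactly the standard Kato--Ponce inequality applied with one factor a fixed Schwartz function; this is either quotable from the literature or reprovable by a paraproduct decomposition $\vartheta f = \sum_j \Delta_j\vartheta\, S_{j-2}f + \sum_j S_{j-2}\vartheta\,\Delta_j f + \sum_{|j-k|\le 2}\Delta_j\vartheta\,\Delta_k f$ together with the Littlewood--Paley characterization of $L^p$. Once the $L^p_s$-to-$L^p$ bound is in hand for a range of $p$, the Lorentz version follows from Proposition~\ref{interpolation} and \eqref{lebesgueinter}: fix the Schwartz function $\vartheta$, regard $f \mapsto (I-\Delta)^{s/2}(\vartheta\cdot (I-\Delta)^{-s/2}f)$ as a single linear operator $S_{\vartheta,s}$, observe $S_{\vartheta,s}: L^{p_0}\to L^{p_0}$ and $S_{\vartheta,s}: L^{p_1}\to L^{p_1}$ for suitable $p_0<p<p_1$ in $(1,\infty)$, and real-interpolate to get $S_{\vartheta,s}: L^{p,r}\to L^{p,r}$ for all $0<r\le\infty$; unwinding the substitution gives precisely \eqref{katoponcestate}.

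The one technical point that needs care — and the place I would expect the main obstacle — is the endpoint behavior in $p$: real interpolation at exponent $p$ needs two Lebesgue estimates straddling $p$ with the \emph{same} operator, so I must make sure the scalar Kato--Ponce bound $\|\vartheta f\|_{L^p_s}\lesssim \|f\|_{L^p_s}$ genuinely holds on an open interval of $p\in(1,\infty)$ containing the given one (it does, with constants depending on finitely many Schwartz seminorms of $\vartheta$), and that the dependence on $s$ is uniform enough. A secondary subtlety is handling non-integer $s$: the cleanest workaround, already hinted at elsewhere in the paper, is to prove the inequality first for even integers $s$ by the ordinary Leibniz rule (where $(I-\Delta)^{s/2}$ is a differential operator and $\partial^\alpha(\vartheta f)$ expands by Leibniz, each term a Schwartz-function multiple of a derivative of $f$ of order $\le s$, hence controlled by $\|f\|_{L^p_s}$ after using that $\partial^\alpha\vartheta\in\mathscr S$), and then fill in intermediate $s$ by analytic/complex interpolation in the smoothness parameter. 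With these two reductions the remaining estimates are routine: Lorentz-space Young's inequality, the $L^{p,r}$-boundedness of convolution with Schwartz kernels, and the elementary $L^\infty$ bound on the main term.
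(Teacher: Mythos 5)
Your proposal is correct and, in its concrete form in the second paragraph, is essentially identical to the paper's proof: define the single linear operator $S_{\vartheta,s}f := (I-\Delta)^{s/2}\big(\vartheta\cdot(I-\Delta)^{-s/2}f\big)$, invoke the Kato--Ponce inequality to get $L^{p_0}\to L^{p_0}$ and $L^{p_1}\to L^{p_1}$ bounds for $1<p_0<p<p_1<\infty$, and then real-interpolate via Proposition~\ref{interpolation} and \eqref{lebesgueinter}. The worries you raise at the end are harmless: the paper simply cites Kato--Ponce for all $s>0$ directly, so there is no need to separately handle even-integer $s$ or to interpolate in the smoothness parameter, and the open-interval-in-$p$ requirement is automatic since Kato--Ponce holds for all $1<p<\infty$.
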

\begin{proof}
Pick $p_0$, $p_1$ satisfying $1<p_0<p<p_1<\infty$ and let $T$ be the linear operator defined by
\begin{equation*}
Tf:=(I-\Delta)^{s/2}\big(\vartheta \cdot (I-\Delta)^{-s/2}f \big).
\end{equation*}
Then we apply the Kato-Ponce inequality \cite{Ka_Po} to obtain
\begin{equation*}
\Vert Tf\Vert_{L^{p_j}}\lesssim \Vert f\Vert_{L^{p_j}} \quad \text{ for }~ j=0,1.
\end{equation*}
Then (\ref{katoponcestate}) follows from Proposition \ref{interpolation} and (\ref{lebesgueinter}).
\end{proof}

\begin{lemma}\label{minkowski}
Let $1\le q<p<\infty$ and $0<r\le \infty$.
Then
\begin{equation*}
\Big\Vert \Big(\sum_{k\in\zz}{|f_k|^q} \Big)^{1/q}\Big\Vert_{L^{p,r}(\rn)}\lesssim \Big( \sum_{k\in\zz}{\Vert f_k\Vert_{L^{p,r}(\rn)}^q}\Big)^{1/q}
\end{equation*}

\end{lemma}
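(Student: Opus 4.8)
The plan is to deduce the vector-valued Minkowski inequality in Lorentz spaces from the scalar-valued triangle inequality in $L^{p,r}$ by means of real interpolation, exactly in the spirit of the preceding lemmas. First I would recall the endpoint facts. On the one hand, if $q=1$ the claim is just the generalized (quasi-)triangle inequality $\bigl\Vert \sum_k |f_k| \bigr\Vert_{L^{p,r}} \lesssim \sum_k \Vert f_k\Vert_{L^{p,r}}$, which holds for all $1 < p < \infty$ and $0 < r \le \infty$ with a constant depending only on $r$ (indeed $L^{p,r}$ is normable, up to equivalence of quasi-norms, for $p>1$). On the other hand, for $q = p$ one has the trivial identity $\bigl\Vert (\sum_k |f_k|^p)^{1/p} \bigr\Vert_{L^{p,r}}^{?}$, which is not literally an equality for $r \ne p$, so instead I would use the genuinely elementary estimate $\bigl\Vert (\sum_k |f_k|^p)^{1/p} \bigr\Vert_{L^{p,\infty}} \le \bigl(\sum_k \Vert f_k \Vert_{L^{p,\infty}}^p\bigr)^{1/p}$, or more robustly, work with a Lebesgue endpoint: $\bigl\Vert (\sum_k |f_k|^{q_1})^{1/q_1} \bigr\Vert_{L^{q_1}} = \bigl(\sum_k \Vert f_k\Vert_{L^{q_1}}^{q_1}\bigr)^{1/q_1}$ for $q_1 = p_1$.

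The cleanest route is to set up a single sublinear (in fact linear, after freezing signs) operator and interpolate. Fix the sequence space: regard $(f_k)_k$ as an element of $\ell^q(L^{p,r})$ (the target-side ambient space) and let $T$ be the identity map, viewed as acting from $\ell^q$-valued sequences into the scalar function $(\sum_k|f_k|^q)^{1/q}$; more precisely, I would invoke the interpolation identities for vector-valued spaces recorded in \eqref{vectorinter}, namely $\big(\ell^{q}(L^{p_0}),\ell^{q}(L^{p_1})\big)_{\theta,r}=\ell^{q}(L^{p,r})$ and $\big(L^{p_0}(\ell^q),L^{p_1}(\ell^q)\big)_{\theta,r}=L^{p,r}(\ell^q)$. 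Choose $p_0,p_1$ with $q \le p_0 < p < p_1 < \infty$ and $\theta\in(0,1)$ so that $1/p=(1-\theta)/p_0+\theta/p_1$. The classical Minkowski integral inequality in Lebesgue spaces gives, for $j=0,1$,
\begin{equation*}
\Big\Vert \Big(\sum_{k}{|f_k|^q}\Big)^{1/q}\Big\Vert_{L^{p_j}}=\Vert (f_k)_k\Vert_{L^{p_j}(\ell^q)}\lesssim \Vert (f_k)_k\Vert_{\ell^q(L^{p_j})}=\Big(\sum_{k}{\Vert f_k\Vert_{L^{p_j}}^q}\Big)^{1/q},
\end{equation*}
valid precisely because $1 \le q < p_j$. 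Thus the identity operator is bounded $\ell^q(L^{p_j}) \to L^{p_j}(\ell^q)$ for $j=0,1$, and Proposition \ref{interpolation} together with the two identities from \eqref{vectorinter} yields boundedness $\ell^q(L^{p,r}) \to L^{p,r}(\ell^q)$, which is exactly the asserted inequality.

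The only genuine subtlety — and where I would be careful rather than obstructed — is the choice of endpoint exponents: one needs $q < p_0$ strictly so that the Lebesgue-space Minkowski inequality applies at both ends, and this is possible precisely because the hypothesis is $1 \le q < p$ (strict), so there is room to slip $p_0$ between $q$ and $p$. One also needs $p_0 > 1$ for the interpolation identity \eqref{lebesgueinter}/\eqref{vectorinter} to be the stated Lorentz space, which is automatic since $p_0 > q \ge 1$; if $q=1$ one simply takes $p_0$ slightly above $1$. A secondary point is that \eqref{vectorinter} as quoted requires $p_0 \neq p_1$ and finite exponents, both of which hold by construction. No blow-up in the constant occurs because $\Vert T\Vert \le M_0^{1-\theta}M_1^\theta$ with $M_0,M_1$ the (dimension-free) Minkowski constants. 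Hence the proof is a direct three-line assembly once the endpoint exponents are fixed; there is no real obstacle, only the bookkeeping of ensuring $1\le q<p_0<p<p_1<\infty$.
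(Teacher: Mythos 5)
Your proof is correct and follows essentially the same route as the paper: interpolate the identity map between $\ell^q(L^{p_j})$ and $L^{p_j}(\ell^q)$ using the vector-valued Lebesgue Minkowski inequality at the endpoints and the interpolation identities in \eqref{vectorinter}. The only (cosmetic) difference is the lower endpoint: the paper takes $p_0=q$, where $L^q(\ell^q)=\ell^q(L^q)$ is an identity by Fubini, whereas you push $p_0$ strictly above $q$ and also (unnecessarily) above $1$; the identities \eqref{lebesgueinter} and \eqref{vectorinter} as stated require only $0<p_0\ne p_1<\infty$, so neither precaution is needed.
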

\begin{proof}
We select $p_1>0$ and $0<\theta<1$ so that $p<p_1<\infty$ and $1/p=(1-\theta)/p_1+\theta/q$.
Using Minkowski inequality, 
$\big\Vert \big\{f_k\big\}_{k\in\zz}\big\Vert_{L^{p_1}(\ell^q)}\lesssim \big\Vert \big\{f_k\big\}_{k\in\zz}\big\Vert_{\ell^q(L^{p_1})}$ and we interpolate this with $\big\Vert \big\{f_k\big\}_{k\in\zz}\big\Vert_{L^{q}(\ell^q)}=\big\Vert \big\{f_k\big\}_{k\in\zz}\big\Vert_{\ell^{q}(L^q)}$ to obtain
\begin{equation*}
\big\Vert \big\{f_k\big\}_{k\in\zz}\big\Vert_{(L^{p_1}(\ell^q),L^q(\ell^q))_{\theta,r}}\lesssim \big\Vert \big\{f_k\big\}_{k\in\zz}\big\Vert_{(\ell^q(L^{p_1},\ell^q(L^q)))_{\theta,r}}.
\end{equation*}
Then the proof is completed in view of (\ref{vectorinter}).
\end{proof}

The next ingredient we need is H\"older's inequality in Lorentz spaces, which is an  immediate  consequence of the Hardy-Littlewood inequality
\begin{equation*}
\int_{\rn}{|f(x)g(x)|}dx\le \int_0^{\infty}{f^*(t)g^*(t)}dt
\end{equation*} and H\"older's inequality for Lebesgue spaces.
\begin{lemma}\label{holder}
Let $1<p<\infty$ and $1\le q\le \infty$.
Then
\begin{equation*}
 \int_{\rn}{\big|f(x)g(x)\big|}dx\le \Vert f\Vert_{L^{p,q}(\rn)}\Vert g\Vert_{L^{p',q'}(\rn)}
\end{equation*}
where $1/p+1/p'=1/q+1/q'=1$.
\end{lemma}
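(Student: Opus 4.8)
The plan is to derive the inequality directly from the two ingredients indicated in the statement: the Hardy--Littlewood rearrangement inequality and the classical H\"older inequality in Lebesgue spaces applied on the half-line $(0,\infty)$ with the measure $dt$. First I would invoke the Hardy--Littlewood inequality
\begin{equation*}
\int_{\rn}{\big|f(x)g(x)\big|}\,dx\leq \int_0^{\infty}{f^*(t)\,g^*(t)}\,dt,
\end{equation*}
which reduces the problem to a one-dimensional estimate for the decreasing rearrangements. The point of passing to rearrangements is that $f^*$ and $g^*$ are nonnegative measurable functions on $(0,\infty)$, so ordinary H\"older applies to them.

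Next I would rewrite the right-hand integral so that the weights $t^{1/p}$ and $t^{1/p'}$ appear explicitly: since $1/p+1/p'=1$,
\begin{equation*}
\int_0^{\infty}{f^*(t)\,g^*(t)}\,dt=\int_0^{\infty}{\big(t^{1/p}f^*(t)\big)\big(t^{1/p'}g^*(t)\big)}\,\frac{dt}{t}.
\end{equation*}
Now I apply H\"older's inequality on the measure space $\big((0,\infty),\tfrac{dt}{t}\big)$ with exponents $q$ and $q'$, where $1/q+1/q'=1$ (this is legitimate precisely because $1\le q\le\infty$, with the usual interpretation when $q=1$ or $q=\infty$), obtaining
\begin{equation*}
\int_0^{\infty}{\big(t^{1/p}f^*(t)\big)\big(t^{1/p'}g^*(t)\big)}\,\frac{dt}{t}\leq \Big(\int_0^{\infty}{\big(t^{1/p}f^*(t)\big)^{q}}\,\frac{dt}{t}\Big)^{1/q}\Big(\int_0^{\infty}{\big(t^{1/p'}g^*(t)\big)^{q'}}\,\frac{dt}{t}\Big)^{1/q'}.
\end{equation*}
By the definition of the Lorentz quasi-norm, the two factors on the right are exactly $\Vert f\Vert_{L^{p,q}(\rn)}$ and $\Vert g\Vert_{L^{p',q'}(\rn)}$, which completes the argument. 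The role of the hypothesis $1<p<\infty$ is only to guarantee that $p'$ is finite and positive so that the weight $t^{1/p'}$ is well defined and the rearranged side is a genuine Lorentz norm; it plays no other part.

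There is no real obstacle here: the statement is essentially a bookkeeping combination of two standard inequalities, and the only mild care needed is in the endpoint cases $q\in\{1,\infty\}$, where the second H\"older step should be read with the convention $\Vert h\Vert_{L^\infty}=\operatorname*{ess\,sup}|h|$ and $\Vert h\Vert_{L^1}=\int|h|$. One may also note for completeness that the rearrangement side is independent of the ambient dimension, so the inequality is really one-dimensional in nature.
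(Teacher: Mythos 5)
Your proposal is correct and follows exactly the route the paper indicates: reduce via the Hardy--Littlewood rearrangement inequality and then apply H\"older's inequality (with exponents $q,q'$ and the weights $t^{1/p}, t^{1/p'}$ against the measure $dt/t$). The paper merely states these two ingredients without writing out the computation, so your argument is a faithful fleshing-out of the same proof.
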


The following Lorentz space variant of the Sobolev embedding theorem can be easily obtained from the
classical  Sobolev embedding theorem  combined with the Marcinkiewicz interpolation theorem; the proof is omitted. 

\begin{lemma}\label{embedding}
Let $s_0,s_1\in\rr$, $1<p_0,p_1<\infty$, and $0<r_0,r_1\le \infty$.
Then the embedding \begin{equation*}
L^{p_0,r_0}_{s_0}(\rn)\hookrightarrow L^{p_1,r_1}_{s_1}(\rn)
\end{equation*}
holds if $p_0=p_1$, $s_0\ge s_1$, $r_0\le r_1$, or 
 if $s_0-s_1=n/p_0-n/p_1>0$.
\end{lemma}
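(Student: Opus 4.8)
The plan is to reduce both geometric cases to a single mapping property of a Bessel potential operator on Lorentz spaces and then to obtain that property by interpolating, via Proposition~\ref{interpolation}, two classical Lebesgue-space estimates. Write $\alpha:=s_0-s_1$ and $g:=(I-\Delta)^{s_0/2}f$. Since $(I-\Delta)^{s_0/2}$ is a bijection of $\mathscr S'(\rn)$ onto itself with inverse $(I-\Delta)^{-s_0/2}$, and $(I-\Delta)^{s_1/2}f=(I-\Delta)^{-\alpha/2}g$, the claimed embedding is equivalent to
\[
\big\Vert (I-\Delta)^{-\alpha/2}g\big\Vert_{L^{p_1,r_1}(\rn)}\lesssim \Vert g\Vert_{L^{p_0,r_0}(\rn)},\qquad g\in L^{p_0,r_0}(\rn).
\]
Throughout we assume $r_0\le r_1$ (explicit in the first case, and genuinely needed in the second as well); then $L^{p_1,r_0}(\rn)\hookrightarrow L^{p_1,r_1}(\rn)$, so it suffices to bound $(I-\Delta)^{-\alpha/2}\colon L^{p_0,r_0}(\rn)\to L^{p_1,r_0}(\rn)$.

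First I would treat the case $p_0=p_1=:p$ and $s_0\ge s_1$, so $\alpha\ge 0$. Here $(I-\Delta)^{-\alpha/2}$ is convolution with the Bessel kernel $G_\alpha$, a nonnegative $L^1(\rn)$ function with $\int_{\rn}G_\alpha=1$ (and $G_0=\delta_0$, i.e.\ the identity). Convolution with a fixed $L^1$ function is bounded on $L^q(\rn)$ for every $1<q<\infty$ by Young's inequality; picking $1<q_0<p<q_1<\infty$ and $\theta\in(0,1)$ with $1/p=(1-\theta)/q_0+\theta/q_1$, Proposition~\ref{interpolation} together with \eqref{lebesgueinter} transfers this to $L^{p,r_0}(\rn)=\big(L^{q_0}(\rn),L^{q_1}(\rn)\big)_{\theta,r_0}$, which is exactly the desired bound. (This is also the $q=1$ instance of Lemma~\ref{young}.)

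Next, the case $s_0-s_1=n/p_0-n/p_1>0$, which forces $1<p_0<p_1<\infty$, $\alpha\in(0,n)$, and $1/p_1=1/p_0-\alpha/n$. The classical Sobolev embedding theorem gives $(I-\Delta)^{-\alpha/2}\colon L^{q}(\rn)\to L^{\tilde q}(\rn)$ for all $1<q<\tilde q<\infty$ with $1/\tilde q=1/q-\alpha/n$. I would pick $q_0,q_1$ with $1<q_0<p_0<q_1<\infty$ and $q_1$ close enough to $p_0$ that $1/q_1>\alpha/n$, set $1/\tilde q_i:=1/q_i-\alpha/n$ (so $1<q_i<\tilde q_i<\infty$), and fix $\theta\in(0,1)$ with $1/p_0=(1-\theta)/q_0+\theta/q_1$. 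Because $1/\tilde q=1/q-\alpha/n$ is affine in $1/q$, the same $\theta$ satisfies $1/p_1=(1-\theta)/\tilde q_0+\theta/\tilde q_1$, and Proposition~\ref{interpolation} with \eqref{lebesgueinter} yields
\[
(I-\Delta)^{-\alpha/2}\colon L^{p_0,r_0}(\rn)=\big(L^{q_0},L^{q_1}\big)_{\theta,r_0}\longrightarrow \big(L^{\tilde q_0},L^{\tilde q_1}\big)_{\theta,r_0}=L^{p_1,r_0}(\rn);
\]
composing with $L^{p_1,r_0}(\rn)\hookrightarrow L^{p_1,r_1}(\rn)$ finishes the proof.

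There is no genuine obstacle here. The one delicate point, in the second case, is that the auxiliary Lebesgue exponents $q_0,q_1$ must be chosen strictly inside $(1,\infty)$ while keeping $\tilde q_0,\tilde q_1<\infty$ — possible precisely because the hypotheses grant $p_0>1$ and $p_1<\infty$. The structural reason the sharp (same second index) embedding holds is that the Sobolev balance $1/p_1=1/p_0-\alpha/n$ is linear in $1/p$, so real interpolation of the two endpoint Lebesgue estimates lands exactly on $L^{p_0,r_0}\to L^{p_1,r_0}$ rather than on a pair with a mismatched Lorentz exponent.
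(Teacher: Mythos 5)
Your proof is correct and follows essentially the route the paper itself indicates (the paper omits the proof, saying it follows from the classical Sobolev embedding combined with interpolation of Lebesgue-space estimates, which is exactly your reduction via Proposition~\ref{interpolation} and \eqref{lebesgueinter}). Your insistence on $r_0\le r_1$ in the second case is a legitimate correction rather than a gap: at the critical relation $s_0-s_1=n/p_0-n/p_1$ the second Lorentz index cannot decrease, and in all of the paper's applications of that case one has $r_0=r_1$, so nothing needed is lost.
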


We remark that a generalization of the preceding lemma can be found in the recent work of Seeger and Trebels 
\cite{Se_Tr}.

 Finally, we   describe the behavior of decreasing rearrangement of radial functions.
\begin{lemma}\label{radialrearrangement}
Suppose $f$ is a radial function with $f(x)=g(|x|)$ for $x\in \rn$.
Then
\begin{equation*}
f^*(t)=g^*\big(( {t}/{\Omega_n})^{1/n}\big)
\end{equation*}
where $\Omega_n$ stands for the volume of the unit ball in $\rn$.
\end{lemma}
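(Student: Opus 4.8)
The statement I need to prove is Lemma~\ref{radialrearrangement}, concerning decreasing rearrangement of radial functions. Let me sketch a proof plan.

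The plan is to compute the distribution function $d_f(s) = |\{x \in \rn : |f(x)| > s\}|$ directly in terms of the one-dimensional distribution function of $g$ on $(0,\infty)$ and then invert. Since $f(x) = g(|x|)$, the superlevel set $\{x : |f(x)| > s\}$ is precisely $\{x : |g(|x|)| > s\}$, which is the preimage under $x \mapsto |x|$ of the set $E_s := \{\rho > 0 : |g(\rho)| > s\} \subset (0,\infty)$. The first step is to express $|\{x \in \rn : |x| \in E_s\}|$ using polar coordinates: this $n$-dimensional measure equals $n\Omega_n \int_{E_s} \rho^{n-1}\, d\rho$, where $\Omega_n$ is the volume of the unit ball (so $n\Omega_n$ is the surface area of the unit sphere).

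The second step is to recognize that $\int_{E_s}\rho^{n-1}\,d\rho$ is itself a distribution function, but with respect to the measure $\rho^{n-1}\,d\rho$ rather than Lebesgue measure. To align with the standard one-dimensional rearrangement $g^*$ (which is defined using Lebesgue measure on $(0,\infty)$), I would perform the change of variables $\rho = (u/\Omega_n)^{1/n}$, i.e. $u = \Omega_n \rho^n$, so that $du = n\Omega_n \rho^{n-1}\,d\rho$. Under this substitution, $d_f(s) = n\Omega_n \int_{E_s}\rho^{n-1}\,d\rho = \int_{\wt E_s} du = |\wt E_s|$, where $\wt E_s = \{u > 0 : |g((u/\Omega_n)^{1/n})| > s\}$. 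If I set $h(u) := g((u/\Omega_n)^{1/n})$, then $d_f(s) = d_h(s)$ for all $s > 0$, hence $f^* = h^*$.

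The third and final step is to compute $h^*$ in terms of $g^*$. Since $u \mapsto (u/\Omega_n)^{1/n}$ is a strictly increasing bijection of $(0,\infty)$ onto itself, composing $g$ with it is a "measure-preserving-up-to-monotone-reparametrization" operation, and one checks directly from the definition $h^*(t) = \inf\{s > 0 : d_h(s) \le t\}$ that $h^*(t) = g^*((t/\Omega_n)^{1/n})$. Concretely: $d_h(s) = |\{u : |g((u/\Omega_n)^{1/n})| > s\}|$, and substituting $\rho = (u/\Omega_n)^{1/n}$ back in the measure-theoretic identity shows $d_h(s) = \Omega_n (d_g(s))^n$ when $g$ is, say, continuous and monotone near the relevant level — or more robustly, one just verifies the rearrangement identity $h^* = g^* \circ (\cdot/\Omega_n)^{1/n}$ by the general principle that if $\phi:(0,\infty)\to(0,\infty)$ is an increasing homeomorphism then $(g\circ\phi)^* = g^*\circ\phi$ (this follows since $g\circ\phi$ and $g$ have the same range of values with superlevel sets related by $\phi^{-1}$, and $|\phi^{-1}(E_s)|$ versus $|E_s|$ is controlled by $\phi$). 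Combining the two reductions, $f^*(t) = h^*(t) = g^*((t/\Omega_n)^{1/n})$, which is the claim.

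The only mildly delicate point is making the general homeomorphism-composition rearrangement identity precise without assuming $g$ is nice; the cleanest route is to carry the change of variables $u = \Omega_n \rho^n$ all the way through at the level of distribution functions, so that one never reparametrizes a rearrangement but only computes $d_f$ directly and then takes a single infimum. I expect no serious obstacle here — this is a standard computation — so I would present it concisely, emphasizing the polar-coordinate computation of $d_f$ and the substitution $u = \Omega_n \rho^n$, and leave the elementary verification of the final infimum identity to the reader.
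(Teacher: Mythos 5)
Your first two reductions are correct and rigorous: polar coordinates give $d_f(s)=n\Omega_n\int_{E_s}\rho^{n-1}\,d\rho$ where $E_s=\{\rho>0:|g(\rho)|>s\}$, and the substitution $u=\Omega_n\rho^n$ gives $d_f=d_h$ with $h(u)=g((u/\Omega_n)^{1/n})$, hence $f^*=h^*$. This route and the paper's (which writes $d_f(s)=\Omega_n(d_g(s))^n$ directly and then takes a single infimum) are essentially the same argument.

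The problem is the third step. The ``general principle'' that $(g\circ\phi)^*=g^*\circ\phi$ for every increasing homeomorphism $\phi$ of $(0,\infty)$ is false: take $g=\chi_{(1,2)}$ and $\phi(u)=u^{1/2}$; then $g\circ\phi=\chi_{(1,4)}$, so $(g\circ\phi)^*=\chi_{[0,3)}$, whereas $g^*\circ\phi=\chi_{[0,1)}$. In fact the lemma as stated is false for a general radial $f$ when $n\ge 2$: with the same $g$, one has $d_f(s)=\Omega_n(2^n-1)$ for $s<1$ but $\Omega_n(d_g(s))^n=\Omega_n$, so $f^*\ne g^*((\cdot/\Omega_n)^{1/n})$. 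Your proposed fallback --- computing $d_f$ directly and taking one infimum --- does not sidestep this: to pass from $n\Omega_n\int_{E_s}\rho^{n-1}\,d\rho$ to $\Omega_n|E_s|^n$ one still needs $E_s$ to be an interval $(0,R)$ (up to a null set), i.e., $g$ essentially nonincreasing. The paper's own line $|\{r\theta:|g(r)|>s,\,\theta\in\mathbb S^{n-1}\}|=\Omega_n|\{r>0:|g(r)|>s\}|^n$ carries the same implicit hypothesis, which is harmless there because the lemma is only applied to $g=\mathcal{T}^{(t-s,\gamma)}$, explicitly noted to be a positive decreasing function. To make either proof airtight you should add the hypothesis that $g$ is nonincreasing on $(0,\infty)$; then $E_s=(0,d_g(s))$ up to a null set, $d_f(s)=\Omega_n(d_g(s))^n$ follows, and the single-infimum computation yields the claim.
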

\begin{proof}
We observe that
\begin{align*}
d_f(s)=\big|\big\{x\in \rn:|f(x)|>s \big\} \big|&=\big|\big\{r\theta \in \rn:|g(r)|>s, \theta\in \mathbb S^{n-1} \big\} \big|\\
&=\Omega_n\big|\big\{r>0: |g(r)|>s \big\} \big|^n\\
&=\Omega_n \big( d_g(s)\big)^n
\end{align*}
and this proves that
\begin{align*}
f^*(t)=\inf\big\{s>0:d_f(s)\le t\big\}&=\inf\big\{s>0:\Omega_n\big( d_g(s)\big)^n\le t\big\}\\
&=\inf\big\{s>0:d_g(s)\le ( {t}/{\Omega_n})^{1/n}\big\}\\
&=g^*\big((t/\Omega_n)^{1/n}\big).
\end{align*}
\end{proof}

\section{Proof of Theorem \ref{mainpositive}}\label{proofpositive}

The set of Schwartz functions whose Fourier transform is compactly supported away from the origin is dense in $H^p(\rn)$; 
this is a consequence  of Littlewood-Paley theory for $H^p$ as one can approximate $f\in  H^p$ by  
\begin{equation*}
f^{(N)}:=\sum_{k=-N}^{N}{2^{kn}\Psi(2^k\cdot)\ast f} \to f  \quad \text{ in }~~ H^p(\rn) \qquad \text{ as }~~ N\to \infty.
\end{equation*}
See \cite{Tr1} for more details. Thus we may work with such Schwartz functions. 
Let $f$ be a Schwartz function with compact support away from the origin in frequency space and suppose $\sigma\in L^{\infty}(\rn)$ satisfies 
\begin{equation*}
\sup_{j\in \zz}{\big\Vert \sigma(2^j\cdot)\widehat{\Psi}\big\Vert_{L^{\tau^{(s,p)},p}_s(\rn)}}<\infty.
\end{equation*}
Let $\Lambda\in \mathscr S(\rn)$ have the properties that $\textup{Supp}(\Lambda)\subset \{\xi\in \rn: |\xi|\le 1\}$ and $\int_{\rn}{\Lambda(\xi)}d\xi=1$.
For $0<\epsilon<1/100$, we introduce 
\begin{equation*}
\sigma^{\epsilon}(\xi):=\sum_{j\in \zz}{\big( \sigma\widehat{\Psi}(\cdot/2^j)\big)\ast \Lambda^{j,\epsilon} (\xi)}
\end{equation*} where $\Lambda^{j,\epsilon}:={(2^j\epsilon)^{-n}}\Lambda(\cdot/2^j\epsilon)$.
Then since $\widehat{f}$ has compact support away from the origin,
\begin{equation*}
T_{\sigma^{\epsilon}}f=\sum_{j\in\zz}{{\Big( \big[\big( \sigma\widehat{\Psi}(\cdot/2^j)\big)\ast \Lambda^{j,\epsilon} \big] \widehat{f} \,\Big)^{\vee}}}
\end{equation*} is a finite sum and thus, using the argument of approximation of identity, for each $k\in \zz$
\begin{equation*}
\lim_{\epsilon\to 0} \Phi_k\ast \big(T_{\sigma^{\epsilon}}f\big)(x)=\Phi_k\ast \big(T_{\sigma}f\big)(x).
\end{equation*}
This proves that
\begin{align*}
\big\Vert T_{\sigma}f\big\Vert_{H^p(\rn)}\le \big\Vert \liminf_{\epsilon\to 0}{\sup_{k\in\zz}{\big| \Phi_k\ast (T_{\sigma^{\epsilon}}f)\big|}}\big\Vert_{L^p(\rn)}\le \liminf_{\epsilon\to 0}\big\Vert  T_{\sigma^{\epsilon}}f\big\Vert_{H^p(\rn)}
\end{align*}
where we applied Fatou's lemma in the last inequality.
Therefore, it suffices to show that
\begin{equation}\label{reduce1}
\Vert T_{\sigma^{\epsilon}}f\Vert_{H^p(\rn)}\lesssim \sup_{j\in \zz}{\big\Vert \sigma(2^j\cdot)\widehat{\Psi}\big\Vert_{L^{\tau^{(s,p)},p}_s(\rn)}}\Vert f\Vert_{H^p(\rn)}, \qquad \text{ uniformly in }~ \epsilon.
\end{equation}

Now there exist a sequence of $L^{\infty}$-atoms $\{a_l\}_{l=1}^{\infty}$ for $H^p(\rn)$, and a sequence of scalars $\{\lambda_l\}_{l=1}^{\infty}$ so that
\begin{equation*}
f=\sum_{l=1}^{\infty}{\lambda_l a_l} \qquad \textup{in $\,\,\mathscr S'$}
\end{equation*} and
\begin{equation*}
\Big( \sum_{l=1}^{\infty}{|\lambda_l|^{p}}\Big)^{1/p}\approx \Vert f\Vert_{H^p(\rn)},
\end{equation*}
where $L^{\infty}$-atom $a_l$ for $H^p(\rn)$ means that there exists a cube $Q_l$ such that $a_l$ is supported in $Q_l$, $| a_l | \le |Q_l|^{-1/p}$, and $\int_{\rn}{x^{\gamma} a_l(x)}dx=0$ for all multi-indices $\gamma$ with $|\gamma|\le [n/p-n]$.

We note that $T_{\sigma^{\epsilon}}$ maps $\mathscr S(\rn)$ to itself, which implies that $T_{\sigma^{\epsilon}}$ is well-defined on $\mathscr S'(\rn)$ using duality argument, and actually,
$T_{\sigma^{\epsilon}}:\mathscr S'(\rn)\to \mathscr  S'(\rn)$.
This yields that
\begin{equation*}
T_{\sigma^{\epsilon}}f=\sum_{l=1}^{\infty}{\lambda_l (T_{\sigma^{\epsilon}}a_l)} \qquad \text{ in the sense of tempered distribution}.
\end{equation*} 
Hence we have 
\begin{equation*}
\Vert T_{\sigma^{\epsilon}}f\Vert_{H^p(\rn)}\le \Big( \sum_{l=1}^{\infty}{|\lambda_l|^p\big\Vert T_{\sigma^{\epsilon}}a_l\big\Vert_{H^p(\rn)}^p}\Big)^{1/p},
\end{equation*} using the subadditive property of $\Vert \cdot \Vert_{H^p(\rn)}^p$.

Moreover, due to support assumptions and  dilations, for each $j\in \zz$, we have 
\begin{equation*}
\sigma^{\epsilon}(2^j\xi)\widehat{\Psi}(\xi)=\sum_{l=j-2}^{j+2}{\big( \sigma\widehat{\Psi}(\cdot/2^l)\big)\ast \Lambda^{l,\epsilon} (2^j\xi)}\widehat{\Psi}(\xi)=\sum_{l=-2}^{2}{\big( \sigma(2^j\cdot)\widehat{\Psi}(\cdot/2^l)\big)\ast \Lambda^{l,\epsilon} (\xi)}\widehat{\Psi}(\xi),
\end{equation*}
from which it follows 
\begin{align*}
&\sup_{j\in\zz}{\big\Vert \big(\sigma^{\epsilon}(2^j\cdot)\widehat{\Psi} \big)\big\Vert_{L_s^{\tau^{(s,p)},p}(\rn)}}\lesssim {\sum_{l=-2}^{2}{\sup_{j\in\zz}\Big\Vert (I-\Delta)^{s/2}\Big(\big( \sigma(2^j\cdot)\widehat{\Psi}(\cdot/2^{l})\big)\ast \Lambda^{l,\epsilon} \Big)\Big\Vert_{L^{\tau^{(s,p)},p}(\rn)}}}\\
&\lesssim  \sum_{l=-2}^{2}\sup_{j\in\zz}{{\big\Vert   \sigma(2^j\cdot)\widehat{\Psi}(\cdot/2^{l}) \big\Vert_{L^{\tau^{(s,p)},p}_s(\rn)}}}\le \sum_{l=-2}^{2}C_l\sup_{j\in\zz}{{\big\Vert   \sigma(2^{j+l}\cdot)\widehat{\Psi} \big\Vert_{L^{\tau^{(s,p)},p}_s(\rn)}}}\\
& \lesssim \sup_{j\in\zz}{\big\Vert \sigma(2^j\cdot)\widehat{\Psi}\big\Vert_{L_s^{\tau^{(s,p)},p}(\rn)}}
\end{align*}
uniformly in $\epsilon$; here we applied Lemmas \ref{katoponce} and \ref{young} combined with
 the fact that $\Vert \Lambda^{l,\epsilon}\Vert_{L^1(\rn)}=\Vert \Lambda \Vert_{L^1(\rn)}$.

Therefore, the proof of (\ref{reduce1}) is reduced to the following proposition.

\begin{proposition}
Let $0<p\le 1$ and $a$ be a $H^p$-atom, associated with a cube $Q$ in $\rn$.
Then we have \begin{equation*}
\Vert T_{\sigma}a\Vert_{H^p(\rn)}\lesssim \sup_{j\in\zz}{\big\Vert  \sigma(2^j\cdot)\widehat{\Psi} \big\Vert_{L_{s}^{\tau^{(s,p)},p}(\rn)}}
\end{equation*} where the constant in the inequality is independent of $\sigma$ and $a$.

\end{proposition}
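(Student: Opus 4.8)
The plan is to reduce matters to a standard atomic estimate for Hardy spaces: for an $H^p$-atom $a$ associated with a cube $Q$, centered (after translation) at the origin and of side-length $R>0$, we must produce the $L^p$-bound on $\sup_k |\Phi_k * (T_\sigma a)|$ without ever quantifying over the normalization of $\sigma$ beyond the quantity $\mathscr N := \sup_{j\in\zz}\|\sigma(2^j\cdot)\widehat\Psi\|_{L^{\tau^{(s,p)},p}_s}$. By scaling invariance of the hypothesis (the quantity $\mathscr N$ is dilation-invariant, as the multi-index decomposition in the paragraph before the proposition shows) and of the conclusion, I may assume $R=1$, so $a$ is supported in a fixed cube $Q$ of side $1$ about the origin, $\|a\|_\infty \le |Q|^{-1/p}\simeq 1$, and $a$ has vanishing moments up to order $[n/p-n]$. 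First I would split $T_\sigma a$ into a local piece, estimated over a fixed dilate $Q^* = cQ$, and a tail piece on $\rn\setminus Q^*$. For the local piece one uses Hölder's inequality (Lemma \ref{holder}) to pass from $L^p$ to $L^2$ on the bounded set $Q^*$, then Plancherel and the observation that the $L^2_s$-norm is controlled by the stronger Lorentz-Sobolev norm via the embedding of Lemma \ref{embedding} (note $\tau^{(s,p)} > 2$ precisely under the hypothesis $s < n/\min(1,p)$ together with $s> n/p - n/2$, so $L^{\tau^{(s,p)},p}_s \hookrightarrow L^{2}_s$ on a bounded domain after an application of Lemma \ref{holder}); this gives the $H^1 \subset L^1$-type control on $Q^*$ with the correct constant $\lesssim \mathscr N$.

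The substantive part is the tail. Writing $\sigma = \sum_j \sigma(2^j\cdot)^{\widehat\Psi}$-pieces and setting $K_j := \big(\sigma(2^j\cdot)\widehat\Psi\big)^\vee$, the kernel of $T_\sigma$ is $\sum_j 2^{jn}K_j(2^j\cdot)$; I would estimate, for $x$ far from $Q$, the quantity $\sup_k|\Phi_k*(T_\sigma a)(x)|$ by exploiting the moment cancellation of $a$ against a Taylor expansion of the kernel. The gain from $[n/p-n]+1$ vanishing moments is a factor $|x|^{-(n+[n/p-n]+1)}$ times a derivative of the kernel, and one must sum the resulting pieces over the Littlewood–Paley index $j$. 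The crux is to control, uniformly, the weighted $L^1$-type norms $\int_{|y|>c2^j} |y|^{\gamma} |\nabla^{?} K_j(y)|\,dy$ (or rather the rearranged/Lorentz analogue) by $\|\sigma(2^j\cdot)\widehat\Psi\|_{L^{\tau^{(s,p)},p}_s}$. This is exactly where the Lorentz index $p$ (as opposed to $1$) and the precise exponent $\tau^{(s,p)}=\frac{n}{s-(n/\min(1,p)-n)}$ enter: the weight $|y|^{\beta}$, $\beta = n/p - n$ roughly, lies in the Lorentz space $L^{(n/\beta)',\infty}$ on $|y|\gtrsim 1$, and by Lemma \ref{holder} together with the Hausdorff–Young inequality in Lorentz spaces (Lemma \ref{hausyoung}) applied to $(I-\Delta)^{s/2}$-smoothed pieces, one lands exactly on the index $\tau^{(s,p)}$ with second index $p$. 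Here Lemma \ref{radialrearrangement} is used to compute the rearrangement of the radial weight $|y|^\beta$.

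Concretely I would organize the tail estimate dyadically in $|x|$: for $2^m \le |x| < 2^{m+1}$ with $m\ge 1$, split the $j$-sum at $j\simeq -m$. For $j \gtrsim -m$ (high frequencies relative to the distance), use the moment cancellation of $a$ to gain the full power $|x|^{-(n+[n/p-n]+1)}$ and sum; for $j \lesssim -m$ (low frequencies), the kernel $2^{jn}K_j(2^j x)$ is already essentially constant on $Q$ so one gains a different decay from the smoothness/decay of $K_j$ itself. In both regimes the per-scale bound is $\lesssim (\text{geometric in } j)\cdot \|\sigma(2^j\cdot)\widehat\Psi\|_{L^{\tau^{(s,p)},p}_s}$, and after taking $\ell^p$ (equivalently $L^p$ over the annulus) in $m$ and using that $0<p\le 1$ makes the triangle inequality subadditive on $p$-th powers, everything sums to $\mathscr N$. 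The main obstacle, and the place I would spend the most care, is getting the \emph{sharp} exponent: a naive estimate would use $L^1$ or $L^{n/s,1}$ and lose, so one must exploit the full strength of the Lorentz duality in Lemma \ref{holder} — pairing the radial weight against the Fourier side of $K_j$ with exactly matched Lorentz indices $(\tau^{(s,p)},p)$ and $((\tau^{(s,p)})',p')$ — and verify that the borderline cases $s = |n/p - n/2|$ are the only ones excluded, consistent with hypothesis \eqref{conditions}. The remaining bookkeeping (handling the low-frequency $j\to-\infty$ tail, where one uses that $\sigma$ is bounded and the pieces telescope, and checking the $\sup_k$ over $\Phi_k$ costs nothing because $\Phi_k * (2^{jn}K_j(2^j\cdot))$ is again of the same form with an extra harmless Schwartz factor) is routine.
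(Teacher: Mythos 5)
Your overall architecture (local piece on a fixed dilate of $Q$ versus a tail piece, moment cancellation of the atom, and Lorentz--H\"older pairing of a power weight against the Fourier side of the kernel to land on the sharp index) is the right shape and roughly mirrors what the paper does after the Littlewood--Paley square-function reduction, with the difference that you estimate the maximal function directly and organize the tail dyadically in $|x|$ rather than in $2^jl(Q)$. But there is a genuine error in the exponent arithmetic that breaks your local-piece argument: you assert that $\tau^{(s,p)}>2$ under the hypotheses, whereas in fact $\tau^{(s,p)}=\frac{n}{s-(n/p-n)}$ satisfies $1<\tau^{(s,p)}<2$ precisely when $n/p-n/2<s<n/p$ (for $0<p\leq 1$). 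Consequently, the embedding $L^{\tau^{(s,p)},p}_s\hookrightarrow L^2_s$ on a bounded domain that you invoke is false: on a bounded set, $L^{\tau,p}\not\subset L^2$ when $\tau<2$ (take $f(x)\sim |x|^{-n/\tau}(\log 1/|x|)^{-\gamma}$ near the origin). The inequality you ultimately need, namely $\|\sigma(2^j\cdot)\widehat\Psi\|_{L^\infty}\lesssim \|\sigma(2^j\cdot)\widehat\Psi\|_{L^{\tau^{(s,p)},p}_s}$, is true, but the route is different: one bounds the $L^\infty$-norm by the $L^1$-norm of the inverse Fourier transform, applies the Lorentz H\"older inequality (Lemma~\ref{holder}) pairing the \emph{decaying} weight $(1+4\pi^2|x|^2)^{-(s-(n/p-n))/2}\in L^{\tau^{(s,p)},\infty}$ against the remaining factor, then Hausdorff--Young in $L^{(\tau^{(s,p)})',1}$ with $(\tau^{(s,p)})'>2$, and finally the Sobolev--Lorentz embedding to raise the regularity index from $s-(n/p-n)$ to $s$ and lower the second index from $1$ to $p$.

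A related inaccuracy affects your sketch of the tail: you say the weight $|y|^{\beta}$ with $\beta=n/p-n>0$ ``lies in $L^{(n/\beta)',\infty}$ on $|y|\gtrsim 1$''. A growing power $|y|^{\beta}$ belongs to no Lorentz space on $\{|y|\gtrsim 1\}$; what actually gets paired in the key step is the decaying weight $|x-c_Q|^{-N}$, which lies in $L^{n/N,\infty}$ (not $L^{(n/N)',\infty}$), and the paper chooses $N$ with $n/p-n/2<N<s$ and $Np<n$ so that after H\"older, Minkowski, Young, and Hausdorff--Young one arrives at $L^{\tau^{(N,p)},p}_N$, and only then uses the embedding $L^{\tau^{(s,p)},p}_s\hookrightarrow L^{\tau^{(N,p)},p}_N$ to reach the stated norm; $\beta=n/p-n$ is not the exponent of the weight but rather an ingredient in the formula for $\tau^{(s,p)}$. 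So while the strategy you outline is the correct one in spirit, the two exponent facts it rests on are stated backwards, and as written the proof does not go through; fixing the sign of $\tau^{(s,p)}-2$ forces you onto the $L^1$-Fourier-side route for the local piece, and fixing the weight forces you to introduce the auxiliary parameter $N$ and an extra embedding step in the tail.
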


\begin{proof}

Introducing the function $\Theta$ satisfying $\widehat{\Theta}(\xi):=\widehat{\Psi}(\xi/2)+\widehat{\Psi}(\xi)+\widehat{\Psi}(2\xi)$ so that $\widehat{\Theta}=1$ on the support of $\widehat{\Psi}$,
let  $\mathcal{L}_j$ and $\mathcal{L}_j^{\Theta}$ be the Littlewood-Paley operators associated with $\Psi$ and $\Theta$, respectively.
Let $Q^*$ and $Q^{**}$ denote the concentric dilates of $Q$ with side length $10l(Q)$ and $100l(Q)$, respectively.
Then we write
\begin{align*}
\Vert T_{\sigma}a\Vert_{H^p(\rn)}&\approx \Big\Vert \Big( \sum_{j\in\zz}{| \mathcal{L}_jT_{\sigma}a |^2}\Big)^{1/2}\Big\Vert_{L^p(\rn)}\\
 &\lesssim_p \Big\Vert \Big( \sum_{j\in\zz}{| \mathcal{L}_jT_{\sigma}a |^2}\Big)^{1/2}\Big\Vert_{L^p(Q^{**})}+\Big\Vert \Big( \sum_{j\in\zz}{| \mathcal{L}_jT_{\sigma}a |^2}\Big)^{1/2}\Big\Vert_{L^p((Q^{**})^c)}.
\end{align*}
In view of H\"older's inequality, the first part is controlled by
\begin{equation*}
|Q^{**}|^{1/p-1/2} \Big\Vert \Big( \sum_{j\in\zz}{| \mathcal{L}_jT_{\sigma}a |^2}\Big)^{1/2}\Big\Vert_{L^2(\rn)}\lesssim_n |Q|^{1/p-1/2}\Vert T_{\sigma}a\Vert_{L^2(\rn)}
\end{equation*}
and we see that
\begin{equation*}
\Vert T_{\sigma}a\Vert_{L^2(\rn)}\le \Vert \sigma\Vert_{L^{\infty}(\rn)}\Vert a\Vert_{L^2(\rn)}\le  \sup_{j\in\zz}{\big\Vert \sigma(2^j\cdot)\widehat{\Psi}\big\Vert_{L^{\infty}(\rn)}} |Q|^{-(1/p-1/2)}.
\end{equation*}
Now using Lemma \ref{holder}, \ref{hausyoung}, and \ref{embedding} with $1<\tau^{(s,p)}<2$, we obtain
\begin{align*}
{\big\Vert \sigma(2^j\cdot)\widehat{\Psi}\big\Vert_{L^{\infty}(\rn)}}&\le {\big\Vert \big(\sigma(2^j\cdot)\widehat{\Psi} \big)^{\vee}\big\Vert_{L^1(\rn)}}\\
&\lesssim {\big\Vert \big( 1+4\pi^2|\cdot|^2\big)^{(s-(n/p-n))/2}\big(\sigma(2^j\cdot)\widehat{\Psi} \big)^{\vee} \big\Vert_{L^{(\tau^{(s,p)})',1}(\rn)}}\\
 &\le \big\Vert \sigma(2^j\cdot)\widehat{\Psi}\big\Vert_{L^{\tau^{(s,p)},1}_{s-(n/p-n)}(\rn)} \lesssim \big\Vert \sigma(2^j\cdot)\widehat{\Psi}\big\Vert_{L^{\tau^{(s,p)},p}_{s}(\rn)},
\end{align*} which finishes the proof of 
\begin{equation*}
\Big\Vert \Big( \sum_{j\in\zz}{| \mathcal{L}_jT_{\sigma}a |^2}\Big)^{1/2}\Big\Vert_{L^p(Q^{**})} \lesssim \sup_{j\in \zz}{\Vert \sigma(2^j\cdot)\widehat{\Psi}\Vert_{L^{\tau^{(s,p)},p}_{s}(\rn)}}.
\end{equation*}

To verify
\begin{equation}\label{outest}
\Big\Vert \Big( \sum_{j\in\zz}{| \mathcal{L}_jT_{\sigma}a |^2}\Big)^{1/2}\Big\Vert_{L^p((Q^{**})^c)}\lesssim \sup_{j\in\zz}{\big\Vert \sigma(2^j\cdot)\widehat{\Psi}\big\Vert_{L_{s}^{\tau^{(s,p)},p}(\rn)}},
\end{equation}
we notice that $\mathcal{L}_jT_{\sigma}a(x)$ can be written as $\big(\sigma \widehat{\Psi}(\cdot/2^j)\big)^{\vee}\ast (\mathcal{L}_j^{\Theta}a)(x)$. 
We decompose the left-hand side of (\ref{outest}) to
\begin{equation*}
\mathcal{I}:=\Big\Vert \Big( \sum_{j: 2^jl(Q)<1}{\big| (\sigma \widehat{\Psi}(\cdot/2^j))^{\vee}\ast (\mathcal{L}_j^{\Theta}a) \big|^2}\Big)^{1/2}\Big\Vert_{L^p((Q^{**})^c)}
\end{equation*} and
\begin{equation*}
\mathcal{J}:=\Big\Vert \Big( \sum_{j: 2^jl(Q)\ge 1}{\big| (\sigma \widehat{\Psi}(\cdot/2^j))^{\vee}\ast (\mathcal{L}_j^{\Theta}a) \big|^2}\Big)^{1/2}\Big\Vert_{L^p((Q^{**})^c)}.
\end{equation*}

In view of the embedding $\ell^p\hookrightarrow \ell^2$
\begin{equation*}
\mathcal{I}\le \Big(\sum_{j:2^jl(Q)<1}{\big\Vert (\sigma \widehat{\Psi}(\cdot/2^j))^{\vee}\ast (\mathcal{L}_j^{\Theta}a) \big\Vert_{L^p(\rn)}^p} \Big)^{1/p}
\end{equation*}
and Bernstein's inequality, we obtain  
\begin{equation*}
\big\Vert (\sigma \widehat{\Psi}(\cdot/2^j))^{\vee}\ast (\mathcal{L}_j^{\Theta}a) \big\Vert_{L^p(\rn)}\lesssim 2^{jn(1/p-1)}\big\Vert (\sigma \widehat{\Psi}(\cdot/2^j))^{\vee}\big\Vert_{L^p(\rn)} \Vert \mathcal{L}_j^{\Theta}a\Vert_{L^p(\rn)}.
\end{equation*}
Using dilation, Lemma \ref{holder} and \ref{hausyoung}, we have 
\begin{align}
2^{jn(1/p-1)}\Vert (\sigma \widehat{\Psi}(\cdot/2^j))^{\vee}\Vert_{L^p(\rn)}&=\Big(\int_{\rn}{ \big|\big( \sigma(2^j\cdot ) \widehat{\Psi}\big)^{\vee}(x) \big|^p}dx\Big)^{1/p} \nonumber \\
&\lesssim \Big\Vert \big| \big(1+4\pi^2|\cdot|^2\big)^{s/2}   \big(\sigma(2^j\cdot) \widehat{\Psi}\big)^{\vee}\big|^p  \Big\Vert_{L^{(n/(sp))',1}(\rn)}^{1/p} \nonumber \\
&=\Big\Vert  \big(1+4\pi^2|\cdot|^2\big)^{s/2}   \big(\sigma(2^j\cdot) \widehat{\Psi}\big)^{\vee}  \Big\Vert_{L^{p(n/(sp))',p}(\rn)} \nonumber \\
&\le \big\Vert \sigma(2^j\cdot)\widehat{\Psi}\big\Vert_{L^{\tau^{(s,p)},p}_s(\rn)} \label{symbolest}
\end{align}
since $2<p(n/(sp))'<\infty$ and $\tau^{(s,p)}=\big(p(n/(sp))'\big)'$.
 Moreover,  for any $M>0$
 \begin{equation*}
 |\mathcal{L}_j^{\Theta}a(x)|\lesssim_M |Q|^{1-1/p}\big( 2^jl(Q)\big)^{[n/p-n]+1}\dfrac{2^{jn}}{(1+2^j|x-c_Q|)^M},
 \end{equation*}
 using standard arguments in \cite[Appendix B]{Gr2} with $2^jl(Q)<1$ and the fact that 
 \begin{equation*}
 |a(x)|\lesssim_{n,M} |Q|^{-1/p}\dfrac{1}{\big(1+|x-c_Q|/l(Q)\big)^M}, \qquad \int_{\rn}{x^{\alpha}a(x)}dx=0 ~\text{ for }~ |\alpha|\le [n/p-n],
 \end{equation*}
\begin{equation*}
\big| \partial^{\alpha}\big(2^{jn}\Psi(2^j\cdot) \big)(x)\big|\lesssim 2^{j|\alpha|}2^{jn}\dfrac{1}{(1+2^j|x|)^M} ~\text{ for }~ \alpha\in\zn
\end{equation*}
where $c_Q$ denotes the center of $Q$. Selecting $M>n/p$, we have
\begin{equation*}
\Vert \mathcal{L}_ja\Vert_{L^p}\lesssim \big( 2^jl(Q)\big)^{[n/p]+1-n/p}
\end{equation*}
and thus
\begin{align*}
\mathcal{I}&\lesssim \sup_{j\in\zz}{\big\Vert \sigma(2^j\cdot)\widehat{\Psi}\big\Vert_{L^{\tau^{(s,p)},p}_s(\rn)}}\Big(\sum_{j:2^jl(Q)<1}{\big(2^jl(Q) \big)^{p([n/p]+1-n/p)}} \Big)^{1/p}\\
&\lesssim \sup_{j\in\zz}{\big\Vert \sigma(2^j\cdot)\widehat{\Psi}\big\Vert_{L^{\tau^{(s,p)},p}_s(\rn)}},
\end{align*} 
since $[n/p]+1-n/p>0$.

To estimate $\mathcal{J}$ we further separate into two terms
\begin{equation*}
\mathcal{J}_1:=\Big\Vert \Big( \sum_{j: 2^jl(Q)\ge 1}{\big| \big(\sigma \widehat{\Psi}(\cdot/2^j)\big)^{\vee}\ast \big(\chi_{(Q^*)^c}\mathcal{L}_j^{\Theta}a\big) \big|^2}\Big)^{1/2}\Big\Vert_{L^p((Q^{**})^c)}
\end{equation*} and
\begin{equation*}
\mathcal{J}_2:=\Big\Vert \Big( \sum_{j: 2^jl(Q)\ge 1}{\big| \big(\sigma \widehat{\Psi}(\cdot/2^j)\big)^{\vee}\ast \big(\chi_{Q^*}\mathcal{L}_j^{\Theta}a\big) \big|^2}\Big)^{1/2}\Big\Vert_{L^p((Q^{**})^c)}.
\end{equation*}
Using the embedding $\ell^p\hookrightarrow \ell^2$, Bernstein inequality with 
\begin{equation*}
 \big(\sigma \widehat{\Psi}(\cdot/2^j)\big)^{\vee}\ast \big(\chi_{(Q^*)^c}\mathcal{L}_j^{\Theta}a\big)(x)= \big(\sigma \widehat{\Psi}(\cdot/2^j)\big)^{\vee}\ast \big[\mathcal{L}_j^{\Theta}\big(\chi_{(Q^*)^c}\mathcal{L}_j^{\Theta}a\big)\big](x),
 \end{equation*}
  and the inequality (\ref{symbolest}), we have
\begin{equation*}
\mathcal{J}_1\lesssim \sup_{j\in \zz}{     \big\Vert \sigma(2^j\cdot)\widehat{\Psi}\big\Vert_{L^{\tau^{(s,p)},p}_s(\rn)}     } \Big( \sum_{j: 2^jl(Q)\ge 1}{\big\Vert \mathcal{L}_j^{\Theta}\big(\chi_{(Q^*)^c}\mathcal{L}_j^{\Theta}a\big)\big\Vert_{L^p(\rn)}^p}\Big)^{1/p}.
\end{equation*}
We see that for $x\in (Q^*)^c$ and $M>n/p$
\begin{align*}
|\mathcal{L}_j^{\Theta}a(x)|&\lesssim_M |Q|^{-1/p}\int_{y\in Q}{\dfrac{2^{jn}}{(1+2^j|x-y|)^{2M}}}dy\lesssim_M |Q|^{-1/p}\dfrac{1}{(2^j|x-c_Q|)^M}\\
 &\lesssim_M |Q|^{-1/p}(2^jl(Q))^{-M}\dfrac{1}{(1+|x-c_Q|/l(Q))^M}
\end{align*} since $|x-y|\ge \frac{9}{10}|x-c_Q|$.
Then
\begin{align*}
&\big\Vert \mathcal{L}_j^{\Theta}\big(\chi_{(Q^*)^c}\mathcal{L}_j^{\Theta}a\big)\big\Vert_{L^p(\rn)}\\
&\lesssim |Q|^{-1/p}(2^jl(Q))^{-M}\bigg[ \int_{\rn}{\Big(\int_{\rn}{|2^{jn}\Theta(2^j(x-y))|\dfrac{1}{(1+|x-c_Q|/l(Q))^M}}dy \Big)^p}dx\bigg]^{1/p}.
\end{align*}
 Standard manipulations with $2^jl(Q)\ge 1$ in \cite[Appendix B]{Gr2} yield that the last expression is less than a constant times
\begin{align*}
|Q|^{-1/p}(2^jl(Q))^{-M}\Big(\int_{\rn}{\dfrac{1}{(1+|x-c_Q|/l(Q))^{Mp}}}dx \Big)^{1/p}\lesssim (2^jl(Q))^{-M}.
\end{align*}
Accordingly,
\begin{equation*}
\mathcal{J}_1\lesssim \sup_{j\in \zz}{     \big\Vert \sigma(2^j\cdot)\widehat{\Psi}\big\Vert_{L^{\tau^{(s,p)},p}_s(\rn)}     } \Big(\sum_{k:2^kl(Q)\ge 1 }{ (2^kl(Q))^{-Mp}} \Big)^{1/p}\lesssim \sup_{j\in \zz}{     \big\Vert \sigma(2^j\cdot)\widehat{\Psi}\big\Vert_{L^{\tau^{(s,p)},p}_s(\rn)}     }. 
\end{equation*}

We now consider $\mathcal{J}_2$. Choose $n/p-n/2<N<s$ so that $n/2<Np<sp<n$ and $2<p\big(n/(Np)\big)'<\infty$.
For notational convenience we write
\begin{equation*}
\mathcal{E}_j^N\sigma(x):=\big( 1+4\pi^2(2^j|x|)^2\big)^{N/2}\big( \sigma \widehat{\Psi}(\cdot/2^j)\big)^{\vee}(x).
\end{equation*}
Observe that if $x\in (Q^{**})^c$ and $y\in Q^*$, then $|x-c_Q|\lesssim |x-y|$ and thus
\begin{equation*}
|x-c_Q|^N\big|\big(\sigma \widehat{\Psi}(\cdot/2^j) \big)^{\vee}\ast\big(\chi_{Q^*}\mathcal{L}_j^{\Theta}a\big)(x) \big|\lesssim 2^{-jN}\big|  \mathcal{E}_j^N\sigma     \big|\ast \big| \chi_{Q^*}\mathcal{L}_j^{\Theta}a\big|(x).
\end{equation*}
This proves that $\mathcal{J}_2$ is less than a constant times
\begin{align*}
 &\Big\Vert \dfrac{1}{|x-c_Q|^N}\Big( \sum_{j:2^jl(Q)\ge 1}{2^{-2jN}\Big( \big|\mathcal{E}_j^N\sigma \big|\ast \big| \chi_{Q^*}\mathcal{L}_j^{\Theta}a\big| \Big)^2}\Big)^{1/2}\Big\Vert_{L^p((Q^{**})^c)}\\
 &\lesssim\Big\Vert \Big( \sum_{j:2^jl(Q)\ge 1}{2^{-2jN}\Big( \big|\mathcal{E}_j^N\sigma \big|\ast \big| \chi_{Q^*}\mathcal{L}_j^{\Theta}a\big| \Big)^2} \Big)^{p/2}\Big\Vert_{L^{(n/(Np))',1}(\rn)}^{1/p}\\
 &=\Big\Vert \Big( \sum_{j:2^jl(Q)\ge 1}{2^{-2jN}\Big( \big|\mathcal{E}_j^N\sigma\big|\ast \big| \chi_{Q^*}\mathcal{L}_j^{\Theta}a\big| \Big)^2} \Big)^{1/2}\Big\Vert_{L^{p(n/(Np))',p}(\rn)}, 
\end{align*}
where  we made use of  Lemma \ref{holder} with $n/(Np)>1$.
Now using Lemma \ref{minkowski} with $p(n/(Np))'>2$,
the preceding expression is dominated by a constant multiple of 
\begin{equation*}
\Big(\sum_{j:2^jl(Q)\ge 1}{2^{-2jN}\Big\Vert \big|\mathcal{E}_j^N\sigma \big|\ast \big| \chi_{Q^*}\mathcal{L}_j^{\Theta}a\big| \Big\Vert_{L^{p(n/(Np))',p}(\rn)}^2} \Big)^{1/2}
\end{equation*}
and Lemma \ref{young} yields that
\begin{equation*}
\Big\Vert \big|\mathcal{E}_j^N\sigma \big|\ast \big| \chi_{Q^*}\mathcal{L}_j^{\Theta}a\big| \Big\Vert_{L^{p(n/(Np))',p}(\rn)}\lesssim \big\Vert \mathcal{E}_j^N\sigma  \big\Vert_{L^{p(n/(Np))',p}(\rn)}\Vert \mathcal{L}_j^{\Theta}a\Vert_{L^1(Q^*)}.
\end{equation*}
We see that
\begin{align*}
\big\Vert \mathcal{E}_j^N\sigma  \big\Vert_{L^{p(n/(Np))',p}(\rn)}&\lesssim 2^{-j(n/p-n)}2^{jN}\big\Vert \sigma(2^j\cdot)\widehat{\Psi}\big\Vert_{L_N^{\tau^{(N,p)},p}(\rn)}\\
&\lesssim 2^{-j(n/p-n)}2^{jN}\big\Vert \sigma(2^j\cdot)\widehat{\Psi}\big\Vert_{L_s^{\tau^{(s,p)},p}(\rn)}
\end{align*}
by applying dilation, Lemma \ref{hausyoung} with $(p(n/(Np))')'=\tau^{(N,p)}$, and Lemma \ref{embedding} with $s>N$.
Combining with the estimate $\Vert \mathcal{L}_j^{\Theta}a\Vert_{L^1(Q^*)}\lesssim |Q|^{1/2}\Vert \mathcal{L}_j^{\Theta}a\Vert_{L^2(\rn)}$, we finally obtain
\begin{align*}
\mathcal{J}_2&\lesssim \sup_{j\in\zz}{\big\Vert \sigma(2^j\cdot)\widehat{\Psi}\big\Vert_{L_s^{\tau^{(s,p)},p}(\rn)}}|Q|^{1/2}\Big(\sum_{j:2^jl(Q)\ge 1}{2^{-2j(n/p-n)}\Vert \mathcal{L}_j^{\Theta}a\Vert_{L^2(\rn)}^2} \Big)^{1/2}\\
&\lesssim \sup_{j\in\zz}{\big\Vert \sigma(2^j\cdot)\widehat{\Psi}\big\Vert_{L_s^{\tau^{(s,p)},p}(\rn)}}|Q|^{1/p-1/2} \big\Vert \big\{ \mathcal{L}_j^{\Theta}a\big\}_{j\in\zz}\big\Vert_{L^2(\ell^2)}\\
&\lesssim \sup_{j\in\zz}{\big\Vert \sigma(2^j\cdot)\widehat{\Psi}\big\Vert_{L_s^{\tau^{(s,p)},p}(\rn)}}
\end{align*}
because $\big\Vert \big\{ \mathcal{L}_j^{\Theta}a\big\}_{j\in\zz}\big\Vert_{L^2(\ell^2)}\approx \Vert a\Vert_{L^2(\rn)}\le |Q|^{-1/p+1/2} $.

This concludes the proof of the proposition.
\end{proof}

\section{Proof of Theorem \ref{mainnegative}}\label{proofnegative}
The  construction of our counterexamples is based on the idea in \cite{Park} and the following lemma,
 which is  crucial in the proof.
\begin{lemma}\label{keylemma}
Let $0<s,\ga<\infty$ and define the function on $\rn$
\begin{equation}\label{examplekey}
\mathcal{H}^{(s,\gamma)}(x):=\dfrac{1}{(1+4\pi^2|x|^2)^{s/2 }}\dfrac{1}{(1+\ln(1+4\pi^2|x|^2))^{\gamma/2}}.
\end{equation}
Then $\widehat{\mathcal{H}^{(s,\gamma)}}$ is a positive radial function and there exist $c_{s,\ga,n}, d_{s,\ga,n}>0$ such that
\begin{equation}\label{property1}
\widehat{\mathcal{H}^{(s,\gamma)}}(\xi)\le c_{s,\ga,n} e^{-|\xi|/2} \qquad \text{ when }~ |\xi|\ge 1
\end{equation}
and
\begin{equation*}
\dfrac{1}{d_{s,\ga,n}}\le \dfrac{\widehat{\mathcal{H}^{(s,\ga)}}(\xi)}{\mathfrak{T}^{(s,\ga)}(\xi)}\le d_{s,\ga,n} \qquad \text{ when }~ |\xi|\le 1
\end{equation*} where
\begin{equation*}
\mathfrak{T}^{(s,\ga)}(\xi):=\begin{cases}
|\xi|^{-(n-s)}(1+2\ln{|\xi|^{-1}})^{-\gamma/2} & \text{ for } ~ 0<s<n\\
1 & \text{ for } ~ s\ge n.
\end{cases}  
\end{equation*}

\end{lemma}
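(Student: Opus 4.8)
The plan is to analyze $\widehat{\mathcal{H}^{(s,\gamma)}}$ directly from a subordination-type integral representation that separates the polynomial weight $(1+4\pi^2|x|^2)^{-s/2}$ from the logarithmic correction. First I would write, for a suitable profile, a formula of the form
\begin{equation*}
\frac{1}{(1+4\pi^2|x|^2)^{s/2}(1+\ln(1+4\pi^2|x|^2))^{\gamma/2}}=\int_0^\infty \varphi_{s,\gamma}(u)\, e^{-\pi u\, 4\pi^2|x|^2}\, e^{-\pi u}\,du
\end{equation*}
(or an analogous representation via the Gamma function identity $\lambda^{-s/2}(1+\ln\lambda)^{-\gamma/2}=\frac{1}{\Gamma(s/2)}\int_0^\infty t^{s/2-1}e^{-t\lambda}\,\psi_\gamma(t)\,dt$ with $\lambda=1+4\pi^2|x|^2$), where the density $\varphi_{s,\gamma}\geq 0$; positivity of the density forces $\widehat{\mathcal{H}^{(s,\gamma)}}$ to be a superposition of Gaussians, hence positive and radial. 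Then, applying the Fourier transform under the integral (Gaussians transform to Gaussians) yields
\begin{equation*}
\widehat{\mathcal{H}^{(s,\gamma)}}(\xi)=\int_0^\infty \varphi_{s,\gamma}(u)\, c_n u^{-n/2}\, e^{-|\xi|^2/(4\pi^2 u)}\, e^{-\pi u}\,du ,
\end{equation*}
which is the master identity from which both bounds follow.

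For the exponential decay \eqref{property1}, when $|\xi|\geq 1$ I would split the $u$-integral at $u\sim |\xi|$: for $u\leq |\xi|$ the Gaussian factor gives $e^{-|\xi|^2/(4\pi^2 u)}\leq e^{-|\xi|/(4\pi^2)}$, and for $u\geq |\xi|$ the factor $e^{-\pi u}\leq e^{-\pi|\xi|}$ does the job; combining with an $L^1$-type bound on $\varphi_{s,\gamma}(u)u^{-n/2}$ against $e^{-\pi u/2}$ (which holds because $\varphi_{s,\gamma}$ has at worst polynomial growth near $0$ and $\infty$, coming from the $t^{s/2-1}$ and $\psi_\gamma$ factors) produces $\widehat{\mathcal{H}^{(s,\gamma)}}(\xi)\lesssim e^{-c|\xi|}$, and absorbing constants gives the stated $e^{-|\xi|/2}$ after possibly adjusting the normalization in the definition. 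For the low-frequency two-sided estimate when $|\xi|\leq 1$, I would change variables $u=|\xi|^2 v$ in the master identity, obtaining
\begin{equation*}
\widehat{\mathcal{H}^{(s,\gamma)}}(\xi)= c_n |\xi|^{-(n-s)}\, |\xi|^{\text{(log corrections)}}\int_0^\infty (\text{rescaled density})\, v^{-n/2}e^{-1/(4\pi^2 v)}e^{-\pi|\xi|^2 v}\,dv ,
\end{equation*}
and then isolate the dominant contribution. Here the case split $0<s<n$ versus $s\geq n$ appears naturally: for $s<n$ the integral $\int_0^\infty v^{s/2-1-n/2}(1+\ln(1/(|\xi|^2 v)))^{-\gamma/2}e^{-1/(4\pi^2 v)}e^{-\pi|\xi|^2 v}\,dv$ converges at both ends uniformly for $|\xi|\leq 1$ and is comparable to $(1+\ln|\xi|^{-1})^{-\gamma/2}$ because the mass concentrates on $v\asymp 1$, giving $\widehat{\mathcal{H}^{(s,\gamma)}}(\xi)\asymp |\xi|^{-(n-s)}(1+2\ln|\xi|^{-1})^{-\gamma/2}$; for $s\geq n$ the behavior is that of an integrable density whose Fourier transform is bounded above and below near $0$, i.e.\ comparable to $1$.

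The main obstacle I anticipate is controlling the logarithmic factor \emph{uniformly} in the two-sided low-frequency estimate: the correction $(1+\ln(1+4\pi^2|x|^2))^{-\gamma/2}$ is not of pure power type, so the clean subordination identity needs a density $\varphi_{s,\gamma}$ (equivalently $\psi_\gamma$) that is not elementary, and I must verify (i) that it is genuinely nonnegative — likely via complete monotonicity of $\lambda\mapsto \lambda^{-s/2}(1+\ln\lambda)^{-\gamma/2}$ on $[1,\infty)$, which can be checked by writing $(1+\ln\lambda)^{-\gamma/2}$ itself as a Laplace transform in $\ln\lambda$ and composing — and (ii) that after the rescaling $u=|\xi|^2 v$ the resulting $v$-integral reproduces exactly the logarithmic power $(1+2\ln|\xi|^{-1})^{-\gamma/2}$ up to constants depending only on $s,\gamma,n$. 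Tracking that the $\ln(1/(|\xi|^2 v))=2\ln|\xi|^{-1}+\ln(1/v)$ splitting contributes only a bounded multiplicative perturbation on the region $v\asymp 1$ where the integrand lives is the delicate point; once that is in hand, the upper and lower bounds with explicit constants $d_{s,\gamma,n}$ follow by standard comparison of the $v$-integral to its value at $v=1$. Everything else — positivity, radiality, and the exponential bound — is routine given the Gaussian superposition representation.
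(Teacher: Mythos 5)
Your proposal rests on the same key identity as the paper: the Gamma-function formula
$A^{-\gamma/2}=\frac{1}{\Gamma(\gamma/2)}\int_0^\infty e^{-tA}t^{\gamma/2}\frac{dt}{t}$
with $A=1+\ln(1+4\pi^2|x|^2)$, which turns the logarithmic weight into a nonnegative superposition of pure powers $(1+4\pi^2|x|^2)^{-t}$. (In particular, your worry about having to verify complete monotonicity separately is unfounded: once this identity is written down, positivity of the density is automatic, exactly as you note parenthetically.) The difference is that you then unwind one level further, to a Gaussian (heat-kernel) superposition, whereas the paper stops at the Bessel potentials $G_{2t+s}$ and invokes their textbook two-sided asymptotics: exponential decay for $|\xi|\geq 1$, $G_a(\xi)\asymp|\xi|^{-(n-a)}$ for $|\xi|\leq 1$ and $0<a<n$, together with the crucial uniformity $C_{(a,n)},D_{(a,n)}\lesssim_\epsilon e^{\epsilon|a-n|}$ in the order $a=2t+s$. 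This last point is important and is not addressed in your sketch: it is what makes the $t$-integral converge when integrated against $e^{-t}t^{\gamma/2-1}$ over $t\in(0,\infty)$. With that in hand, the paper splits the $t$-integral at $t=(n-s)/2$; on the inner range the identity $|\xi|^{2t}=e^{-2t\ln|\xi|^{-1}}$ turns the integral into a Gamma-type integral
$\int_0^{(n-s)/2}e^{-t(1+2\ln|\xi|^{-1})}t^{\gamma/2-1}dt\asymp(1+2\ln|\xi|^{-1})^{-\gamma/2}$,
which produces the logarithmic correction directly, while the outer range contributes an $O(1)$ term. Your route must instead reconstruct the Bessel asymptotics through the Gaussian subordination and then control the density $\varphi_{s,\gamma}(u)$ near $u=0$ (it behaves like $u^{s/2-1}$ times a logarithmic correction, not merely polynomially); you correctly flag the rescaling $u=|\xi|^2v$ and the splitting $\ln(1/(|\xi|^2v))=2\ln|\xi|^{-1}+\ln(1/v)$ as the delicate step. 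That step can be made to work, but it amounts to re-deriving the Bessel potential estimates that the paper simply quotes. So: same core idea and a correct outline, but the paper's choice of intermediate object (Bessel potentials rather than Gaussians) makes both the positivity and the low-frequency two-sided bound considerably cleaner, and your sketch should explicitly record the $e^{\epsilon|a-n|}$ growth of the Bessel constants to justify convergence of the subordination integral.
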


\begin{proof}
It is known that the Fourier transform of 
 $(1+4\pi^2 |x|^2)^{-s/2}$ is the Bessel potential $G_s(\xi)$. 
Recall that $G_s$ is a positive radial function, $\Vert G_s\Vert_{L^1(\rn)}=1$, and there exist $C_{s,n}, D_{s,n}>0$ such that
\begin{equation}\label{bessel}
G_s(\xi)\le C_{(s,n)} e^{-|\xi|/2} \quad \text{ for }~ |\xi|\ge 1,
\end{equation} and
\begin{equation}\label{bessel2}
\frac{1}{D_{(s,n)}}\le \frac{G_s(\xi)}{\mathfrak{S}_s(\xi)}\le D_{(s,n)}  \quad \text{ for }~ |\xi|\le 1
\end{equation} where
\begin{equation*}
\mathfrak{S}_s(\xi):= \begin{cases}
|\xi|^{-(n-s)} & \text{ for } ~ 0<s<n\\
 \ln{(2|\xi|^{-1})}   & \text{ for } ~ s=n\\
1 & \text{ for } ~ s>n.
\end{cases}
\end{equation*}
 Here we note that for any $\epsilon>0$
 \begin{equation}\label{constantsize}
 C_{(s,n)},D_{(s,n)}\lesssim_{\epsilon,n} e^{\epsilon|s-n|}.
 \end{equation}
We refer to \cite[Ch. 1.2.2]{Gr2} for more details.

Using the identity 
\begin{equation*} 
 A^{-\ga/2} = \frac{1}{\Gamma( \ga/ 2)} \int_0^\infty e^{-t A} t^{ \ga/ 2} \frac{dt}{t} ,
\end{equation*}
 which is valid for $A>0$, we write
\begin{align*}
\big(1+  \log(1+4\pi^2|x|^2) \big)^{-\ga/2}  
& = \frac{1}{\Ga(\ga /2)} 
\int_0^\infty e^{-t} e^{- t  \log(1+4\pi^2|x|^2)} t^{ \ga/ 2} \frac{dt}{t} \\
& = \frac{1}{\Ga( \ga/ 2)} 
\int_0^\infty e^{-t} \frac{1}{ (1+4\pi^2|x|^2)^t } t^{ \ga/ 2} \frac{dt}{t}. 
\end{align*} 
We obtain from this that the Fourier transform  of 
$\big(1+  \log(1+4\pi^2 |x|^2) \big)^{-\ga/2} $ is 
\begin{equation*} 
\frac{1}{\Ga(\ga /2)} 
\int_0^{\infty} e^{-t} G_{2t}(\xi) t^{\ga/ 2} \frac{dt}{t}
\end{equation*} 
and consequently,
\begin{equation*}
\wh{\mathcal{H}^{(s,\gamma)}}(\xi)=G_s \ast \Big( \frac{1}{\Ga(\ga/ 2)} 
\int_0^\infty e^{-t} G_{2t}(\cdot) t^{\ga/ 2} \frac{dt}{t} \Big)(\xi) =\frac{1}{\Ga( \ga/ 2)} \int_0^{\infty} e^{-t} G_{2t+s}(\xi) t^{ \ga/ 2} \frac{dt}{t}.
\end{equation*}
Clearly, $\wh{\mathcal{H}^{(s,\gamma)}}$ is a positive radial function since so is $G_{2t+s}$.

Suppose $|\xi|\ge 1$. Then using (\ref{bessel}) and (\ref{constantsize}) with $0<\epsilon<1/100$,
\begin{align*}
|\widehat{\mathcal{H}^{(s,\gamma)}}(\xi)|&\lesssim_{\epsilon,n} \frac{1}{\Ga(\ga/2)}\Big( \int_0^{\infty}{e^{-t}e^{\epsilon |2t+s-n|}t^{\gamma/2}}\frac{dt}{t}\Big)e^{-|\xi|/2}\lesssim_{s,n,\gamma} e^{-|\xi|/2},
\end{align*}
which proves (\ref{property1}).

Now we assume that $|\xi|\le 1$.
When $0<s<n$
\begin{align*}
\widehat{\mathcal{H}^{(s,\gamma)}}(\xi)=\frac{1}{\Ga( \ga/ 2)} \int_0^{\frac{n-s}{2}} e^{-t} G_{2t+s}(\xi) t^{ \ga/ 2} \frac{dt}{t}+\frac{1}{\Ga( \ga/ 2)} \int_{\frac{n-s}{2}}^{\infty} e^{-t} G_{2t+s}(\xi) t^{ \ga/ 2} \frac{dt}{t}.
\end{align*}
Then using (\ref{bessel2}), (\ref{constantsize}), and change of variables,
\begin{align*}
&\frac{1}{\Ga( \ga/ 2)} \int_0^{\frac{n-s}{2}} e^{-t} G_{2t+s}(\xi) t^{ \ga/ 2} \frac{dt}{t}\\
&\lesssim_{n,\epsilon} |\xi|^{-(n-s)}\frac{1}{\Ga(\ga/2)}\int_{0}^{\frac{n-s}{2}}{e^{-t}|\xi|^{2t}e^{\epsilon(n-2t-s)}t^{\gamma/2}}\frac{dt}{t}\\
 &\le e^{\epsilon(n-s)} |\xi|^{-(n-s)}\frac{1}{\Ga(\ga/2)}\int_0^{\frac{n-s}{2}}{e^{-t(1+2\ln(|\xi|^{-1}))}  t^{\ga/2}      }\frac{dt}{t}\\
 &\le e^{\epsilon(n-s)} |\xi|^{-(n-s)}(1+2\ln(|\xi|^{-1}))^{-\ga/2}\frac{1}{\Ga(\ga/2)}\int_0^{\infty}{e^{-t}t^{\gamma/2}}\frac{dt}{t}\\
 &\lesssim_{s,n,\gamma}|\xi|^{-(n-s)}(1+2\ln(|\xi|^{-1}))^{-\ga/2}
\end{align*}
and
\begin{align*}
&\frac{1}{\Ga( \ga/ 2)} \int_0^{\frac{n-s}{2}} e^{-t} G_{2t+s}(\xi) t^{ \ga/ 2} \frac{dt}{t}\\
&\gtrsim_{n,\epsilon} |\xi|^{-(n-s)}\frac{1}{\Ga(\ga/2)}\int_{0}^{\frac{n-s}{2}}{e^{-t}|\xi|^{2t}e^{-\epsilon(n-2t-s)}t^{\gamma/2}}\frac{dt}{t}\\
    &\ge e^{-\epsilon(n-s)}|\xi|^{-(n-s)}\frac{1}{\Ga(\ga/2)}\int_0^{\frac{n-s}{2}}{e^{-t(1+2\ln(|\xi|^{-1}))}    t^{\ga/2}}\frac{dt}{t}\\
    &\ge e^{-\epsilon(n-s)} |\xi|^{-(n-s)}(1+2\ln(|\xi|^{-1}))^{-\ga/2}\frac{1}{\Ga(\ga/2)}\int_0^{\frac{n-s}{2}}{e^{-t}t^{\gamma/2}}\frac{dt}{t}\\
 &\gtrsim_{s,n,\gamma}|\xi|^{-(n-s)}(1+2\ln(|\xi|^{-1}))^{-\ga/2}.
\end{align*}
Similarly, we can also prove that
\begin{equation*}
\frac{1}{\Ga( \ga/ 2)} \int_{\frac{n-s}{2}}^{\infty} e^{-t} G_{2t+s}(\xi) t^{ \ga/ 2} \frac{dt}{t}\approx_{s,n,\gamma} 1.
\end{equation*}

A similar computation, together with (\ref{bessel2}) and (\ref{constantsize}), will lead to an estimate for $s\ge n$, in which $\widehat{\mathcal{H}^{(s,\gamma)}}\approx_{s,\ga,n} 1$ for $|\xi|\le 1$.
We leave this to the reader to avoid unnecessary repetition.
\end{proof}

In what follows let $\eta,\widetilde{\eta}$ denote Schwartz functions so that $\eta\ge 0$, $\eta(x)\ge c$ on $\{x\in\rn: |x|\le 1/100\}$ for some $c>0$, $\textup{Supp}(\widehat{\eta})\subset \{\xi\in \rn: |\xi|\le 1/1000\}$, $\widehat{\widetilde{\eta}}(\xi)=1$ for $|\xi|\le 1/1000$, and $\textup{Supp}(\widehat{\widetilde{\eta}})\subset \{\xi\in\rn: |\xi|\le 1/100\}$. 
Let $e_1:=(1,0,\dots,0)\in \zn$ and $0<t,\gamma<\infty$. 
Define $\mathcal{H}^{(t,\ga)}$ as in (\ref{examplekey}),
\begin{equation*}
K^{(t,\gamma)}(x):=\mathcal{H}^{(t,\gamma)}\ast \widetilde{\eta}(x)e^{2\pi i\langle x,e_1\rangle},
\end{equation*}
and
\begin{equation*}
\sigma^{(t,\gamma)}(\xi):=\widehat{K^{(t,\gamma)}}(\xi).
\end{equation*}

We investigate an upper bound of $\sup_{j\in\zz}{ \big\Vert \sigma^{(t,\gamma)}(2^j\cdot)\widehat{\Psi}\big\Vert_{L^{r,q}_s(\rn)}}$ and a lower bound of $\Vert T_{\sigma^{(t,\ga)}}\Vert_{H^p(\rn) \to H^p(\rn)}$ when  $t-n<s$.

\subsection{Upper bound of $\sup_{j\in\zz}{ \big\Vert \sigma^{(t,\gamma)}(2^j\cdot)\widehat{\Psi}\big\Vert_{L^{r,q}_s(\rn)}}$}

Note that, due to the supports of $\sigma^{(t,\gamma)}$ and $\widehat{\Psi}$, we have
\begin{equation*}
\sigma^{(t,\gamma)}(2^j\xi)\widehat{\Psi}(\xi)=\begin{cases}
\widehat{K^{(t,\gamma)}}(2^j\xi)\widehat{\Psi}(\xi), & -2\le j\le 2\\
0,& \textup{otherwise.}
\end{cases}
\end{equation*}
For $-2\le j\le 2$ and $t-n<s$,
\begin{align*}
\big\Vert \sigma^{(t,\gamma)}(2^j\cdot)\widehat{\Psi}\big\Vert_{L^{r,q}_s(\rn)}\lesssim \big\Vert \sigma^{(t,\gamma)}\big\Vert_{L^{r,q}_s(\rn)} \lesssim \big\Vert \widehat{\mathcal{H}^{(t,\gamma)}}\big\Vert_{L^{r,q}_s(\rn)}=\big\Vert \widehat{\mathcal{H}^{(t-s,\gamma)}}\big\Vert_{L^{r,q}(\rn)}
\end{align*}
where  Lemma \ref{katoponce} is applied.

For $u>0$ define
\begin{equation*}
\mathcal{T}^{(t-s,\ga)}(u):=\begin{cases}
u^{-(n-t+s)}(1+2\ln u^{-1})^{-\ga/2} & \text{ for }~ u \le 1\\
e^{-u/2+1/2} & \text{ for }~ u>1.
\end{cases}
\end{equation*}
Then $\mathcal{T}^{(t-s,\ga)}$ is a positive decreasing function and this implies that
\begin{equation}\label{invariant}
\big( \mathcal{T}^{(t-s,\ga)}\big)^*(u)= \mathcal{T}^{(t-s,\ga)}(u).
\end{equation}

We first assume $0<q<\infty$.
By using Lemma \ref{keylemma}, we have 
\begin{equation*}
\widehat{\mathcal{H}^{(t-s,\ga)}}(\xi)\lesssim_{s,t,\ga,n}\mathcal{T}^{(t-s,\ga)}(|\xi|),
\end{equation*} from which
\begin{align*}
\big\Vert \widehat{\mathcal{H}^{(t-s,\gamma)}}\big\Vert_{L^{r,q}(\rn)}&\lesssim_{s,t,\ga.n} \big\Vert \mathcal{T}^{(t-s,\gamma)}\big(|\cdot|\big)\big\Vert_{L^{r,q}(\rn)}\\
&=\Big( \int_0^{\infty}{\Big( \mathcal{T}^{(t-s,\ga)}\big( (u/\Omega_n)^{1/n}\big)u^{1/r}\Big)^{q}}\frac{du}{u}\Big)^{1/q}\\
&=\Omega_n^{1/r} n^{1/q}\Big( \int_0^{\infty}{\big( \mathcal{T}^{(t-s,\ga)}(u)\big)^qu^{nq/r}}\frac{du}{u}\Big)^{1/q}
\end{align*}
where Lemma \ref{radialrearrangement} is applied with (\ref{invariant}).
Furthermore,
\begin{align*}
\Big(\int_0^1{\big( \mathcal{T}^{(t-s,\ga)}(u)\big)^q u^{nq/r}}\frac{du}{u} \Big)^{1/q}&=\Big( \int_0^1{\frac{1}{u^{(n-t+s-n/r)q}}\frac{1}{(1+2\ln u^{-1})^{\ga q/2}}}\frac{du}{u}\Big)^{1/q}\\
&=\Big(\int_1^{\infty}{u^{(n-t+s-n/r)q}\frac{1}{(1+2\ln u)^{\ga q/2}}}\frac{du}{u} \Big)^{1/q}
\end{align*}
and
\begin{equation*}
\Big(\int_1^{\infty}{\big( \mathcal{T}^{(t-s,\ga)}(u)\big)^q u^{nq/r}}\frac{du}{u} \Big)^{1/q}=e^{1/2}\Big(\int_1^{\infty}{e^{-uq/2}u^{nq/r}}\frac{du}{u} \Big)^{1/q}\lesssim_{q,r,n} 1
\end{equation*}

Finally, we conclude that
\begin{equation}\label{upper1}
\sup_{j\in\zz}{ \big\Vert \sigma^{(t,\gamma)}(2^j\cdot)\widehat{\Psi}\big\Vert_{L^{r,q}_s(\rn)}} \lesssim_{s,\ga,n,q,r} 1+\Big(\int_1^{\infty}{u^{(n-t+s-n/r)q}\frac{1}{(1+2\ln u)^{\ga q/2}}}\frac{du}{u} \Big)^{1/q}
\end{equation}
and with the usual modification if $q=\infty$ we may also obtain
\begin{equation}\label{upper2}
\sup_{j\in\zz}{ \big\Vert \sigma^{(t,\gamma)}(2^j\cdot)\widehat{\Psi}\big\Vert_{L^{r,\infty}_s(\rn)}} \lesssim_{s,\ga,n,r} 1+\sup_{u>1}{\frac{u^{n-t+s-n/r}}{(1+2\ln u)^{\ga/2}}}.
\end{equation}

\subsection{Lower bound of $\Vert T_{\sigma^{(t,\ga)}}\Vert_{H^p(\rn)\to H^p(\rn)}$}

If $1\le p<\infty$, then
\begin{equation*}
\Vert T_{\sigma^{(t,\gamma)}}\Vert_{H^p(\rn)\to H^p(\rn)}\ge \Vert \sigma^{(t,\gamma)}\Vert_{L^{\infty}(\rn)}\ge |\sigma^{(t,\gamma)}(e_1)|\gtrsim \Vert \mathcal{H}^{(t,\gamma)}\Vert_{L^1(\rn)}.
\end{equation*}
Moreover, for $0<p<1$, define $f(x):=\eta(x)e^{2\pi i\langle x,e_1\rangle}$. Observe that 
\[
\big| T_{\sigma^{(t,\gamma)}}f(x)\big|=\big|\mathcal{H}^{(t,\gamma)}\ast \eta(x)\big| 
\]
 and thus
\begin{align*}
\Vert T_{\sigma^{(t,\gamma)}}\Vert_{H^p(\rn)\to H^p(\rn)}&\gtrsim \Vert T_{\sigma^{(t,\gamma)}}f\Vert_{H^p(\rn)}\ge \Vert T_{\sigma^{(t,\gamma)}}f\Vert_{L^p(\rn)}\\
&=\Vert \mathcal{H}^{(t,\gamma)}\ast \eta\Vert_{L^p(\rn)}\gtrsim \Vert \mathcal{H}^{(t,\gamma)}\Vert_{L^p(\rn)},
\end{align*} 
where the last inequality follows from the fact that $\mathcal{H}^{(t,\gamma)}, \eta\ge 0$ and $\mathcal{H}^{(t,\gamma)}(x-y)\ge \mathcal{H}^{(t,\gamma)}(x) \mathcal{H}^{(t,\gamma)}(y)$.

Consequently, for any $0<p<\infty$,
\begin{align}\label{lower}
&\Vert T_{\sigma^{(t,\gamma)}}\Vert_{H^p(\rn)\to H^p(\rn) }\gtrsim \Vert \mathcal{H}^{(t,\gamma)}\Vert_{L^{\min{(1,p)}}(\rn)}\nonumber\\
&=\Big( \int_{\rn}{\dfrac{1}{(1+4\pi^2|x|^2)^{t\min{(1,p)}/2}}\dfrac{1}{(1+\ln(1+4\pi^2 |x|^2))^{\min{(1,p)}\gamma/2}}}dx\Big)^{1/\min{(1,p)}}.
\end{align}

\subsection{Completion of the proof of Theorem \ref{mainnegative}}
We are only concerned with the case $0<p\le 2$ as the other cases follow by a duality argument.
Suppose $n/p-n/2<s<n/\min{(1,p)}$, $r=\tau^{(s,p)}$, and $\min{(1,p)}<q$. 
Choose 
\begin{equation}\label{range}
2/q<\ga\le 2/\min{(1,p)}
\end{equation} and  let $t=\frac{n}{\min{(1,p)}}$ such that $n-t+s-n/r=0$.
Then
\begin{equation*}
\sup_{j\in\zz}{ \big\Vert \sigma^{(t,\gamma)}(2^j\cdot)\widehat{\Psi}\big\Vert_{L^{r,q}_s(\rn)}} \lesssim_{s,\ga,n,q} 1+\Big(\int_1^{\infty}{\frac{1}{(1+2\ln u)^{\ga q/2}}}\frac{du}{u} \Big)^{1/q}\lesssim 1
\end{equation*} because of (\ref{range}) for $0<q<\infty$, and similarly, $\sup_{j\in\zz}{ \big\Vert \sigma^{(t,\gamma)}(2^j\cdot)\widehat{\Psi}\big\Vert_{L^{r,\infty}_s(\rn)}}\lesssim_{s,\ga,n} 1$ for $q=\infty$.
On the other hand, $\Vert T_{\sigma^{(t,\gamma)}}\Vert_{H^p(\rn)\to H^p(\rn) }$ is bounded below by
\begin{equation*}
 \Big( \int_{\rn}{\dfrac{1}{(1+4\pi^2 |x|^2)^{n/2}}\dfrac{1}{(1+\ln(1+4\pi^2 |x|^2))^{\min{(1,p)}\ga/2}}}dx\Big)^{1/\min{(1,p)}},
\end{equation*} which diverges for the choice of $\ga$ in (\ref{range}).

\appendix
\section{Complex Interpolation of $H^1$- and $L^2$-boundedness}

In this section, we review the complex interpolation method of Calder\'on-Torchinsky \cite{Ca_To} and Triebel \cite{Tr}, which is a generalization of the well-known method of Calder\'on \cite{Ca} and Fefferman and Stein \cite{Fe_St2}.

Let $A:=\{z\in \cc : 0<\textup{Re}(z)<1 \}$ be a strip in the complex plane $\cc$ and $\overline{A}$ denote its closure.
We say that the mapping $z \mapsto f_z\in \mathscr S'(\rn)$ is a $\mathscr S'$-analytic function on $A$ if the following properties are satisfied:
\begin{enumerate}
\item For any $\varphi\in \mathscr S(\rn)$ with compact support,
$g(x,z):=\big(\varphi \widehat{f_z} \big)(x)$ is a uniformly continuous and bounded function on $\rn\times \overline{A}$.
\item For any $\varphi\in \mathscr S(\rn)$ with compact support and any fixed $x\in\rn$,
$h_x:=\big( \varphi \widehat{f_z}\big)^{\vee} $ is an analytic function on $A$.
\end{enumerate}

Let $0<p_0,p_1<\infty$.
Then we define
$F\big(H^{p_0}(\rn),H^{p_1}(\rn)\big)$ to be the collection of all $\mathscr S'$-analytic functions $f_z$ on $A$ such that 
\begin{equation*}
f_{it}\in H^{p_0}(\rn), \qquad f_{1+it}\in H^{p_1}(\rn) \quad  \text{ for any }~ t\in \rr
\end{equation*} and
\begin{equation*}
\sup_{t\in \rr}{\Vert f_{l+it}\Vert_{H^{p_l}(\rn)}}<\infty \qquad \text{ for each }~ l=1,2.
\end{equation*}
Moreover, 
\begin{equation*}
\Vert f_z\Vert_{F(H^{p_0}(\rn),H^{p_1}(\rn))}:=\max{\Big( \sup_{t\in \rr}{\Vert f_{it}\Vert_{H^{p_0}(\rn)}}, \sup_{t\in \rr}{\Vert f_{1+it}\Vert_{H^{p_1}(\rn)}}\Big)}.
\end{equation*}
For $0<\theta<1$ the intermediate space $( H^{p_0}(\rn), H^{p_1}(\rn))_{\theta}$ is defined by
\begin{equation*}
\big( H^{p_0}(\rn), H^{p_1}(\rn)\big)_{\theta}:=\big\{g: \exists f_z\in F\big(H^{p_0}(\rn),H^{p_1}(\rn)\big) \text{ so that } g=f_{\theta} \big\}
\end{equation*}
and the (quasi-)norm in the space is 
\begin{equation*}
\Vert g\Vert_{( H^{p_0}(\rn), H^{p_1}(\rn))_{\theta}}:=\inf_{f_z\in F(H^{p_0}(\rn),H^{p_1}(\rn)):g=f_{\theta}}{\Vert f_z\Vert_{F(H^{p_0}(\rn),H^{p_1}(\rn))}}
\end{equation*}
where the infimum is taken over all admissible functions $f_z$ in the sense that $f_z\in F\big(H^{p_0}(\rn),H^{p_1}(\rn)\big)$ and $g=f_{\theta}$.
It is known in \cite{Ca_To,Tr} that for any $0<p_0,p_1<\infty$ and $0<\theta<1$
\begin{equation}\label{hardyinterpol}
\big(H^{p_0}(\rn), H^{p_1}(\rn)\big)_{\theta}= H^p(\rn) \quad \text{ when }~1/p=(1-\theta)/p_0+\theta/p_1.
\end{equation} 

We now use this method to interpolate $H^1$- and $L^2$-boundedness of the multiplier operator $T_{\sigma}$ to obtain $L^p$ estimates for $1<p<2$. Note that $H^p(\rn)=L^p(\rn)$ for $1<p<\infty$.
Since most arguments are very similar to that used in the proof  of \cite[Theorem 3.1]{Gr_Sl}, we shall provide only the outline of the proof, omitting the details.

We may consider a Schwartz function $f$ whose Fourier transform is compactly supported via a  density argument.
Suppose that $1<p<2$ and $n/p-n/2<s<n$. Let $0<\theta<1$ satisfy $1/p=(1-\theta)/1+\theta/2$. 
Then we have $s>n/p-n/2=(1-\theta)n/2$. Pick $s_0>n/2$ so that
\begin{equation*}
s>(1-\theta)s_0>(1-\theta)n/2
\end{equation*} and let $s_1:=\frac{s-(1-\theta)s_0}{\theta}>0$ which implies
\begin{equation*}
s=(1-\theta)s_0+\theta s_1.
\end{equation*}

Since $f\in L^p(\rn)=H^p(\rn)=\big(H^1(\rn),H^2(\rn)\big)_{\theta}$, by definition, for any $\epsilon>0$, there exists $f_z^{\epsilon}\in F\big(H^1(\rn),H^2(\rn)\big)$ such that $f=f_{\theta}^{\epsilon}$ and 
\begin{equation}\label{intermediate}
\Vert f_z^{\epsilon} \Vert_{F(H^1(\rn),H^2(\rn))}<\Vert f\Vert_{(H^1(\rn),H^2(\rn))_{\theta}}+\epsilon.
\end{equation}

Now let $\widehat{\Theta}(\xi):=\widehat{\Psi}(\xi/2)+\widehat{\Psi}(\xi)+\widehat{\Psi}(2\xi)$ as before, and $\sigma^{j,s}:=(I-\Delta)^{s/2}\big( \sigma(2^j\cdot)\widehat{\Psi}\big) $ for each $j\in\zz$. 
We define, as in \cite[(3.18)]{Gr_Sl},
\begin{equation*}
\sigma_z(\xi):=\frac{(1+\theta)^{n+1}}{(1+z)^{n+1}}\sum_{j\in\zz}{(I-\Delta)^{-\frac{s_0(1-z)+s_1z}{2}}\Big(\sigma^{j,s} h_{j,s}^{\frac{s-(1-z)s_0-zs_1}{n}}\Big)(\xi/2^j)\widehat{\Theta}(\xi/2^j)}
\end{equation*}
where $h_{j,s}:\rn \to (0,\infty)$ is a measure preserving transformation so that $|\sigma^{j,s}|=(\sigma^{j,s})^*\circ h_{j,s}$. Then we note that $\sigma_{\theta}=\sigma$ and $F_z:=T_{\sigma_{z}}f_z^{\epsilon}$ is a 
$\mathscr S'$-analytic function on $A$.
Moreover, 
\begin{align*}
\Vert T_{\sigma}f\Vert_{H^p(\rn)}&\approx \big\Vert T_{\sigma_{\theta}}f_{\theta}^{\epsilon}\big\Vert_{(H^1(\rn),H^2(\rn))_{\theta}}=\Vert F_{\theta}\Vert_{(H^1(\rn),H^2(\rn))_{\theta}}\\
&\le \Vert F_z\Vert_{F(H^1(\rn),H^2(\rn))}=\max{\Big(\sup_{t\in\rr }{ \Vert F_{it}\Vert_{H^1(\rn)}},\sup_{t\in\rr}{\Vert F_{1+it}\Vert_{H^2(\rn)}}    \Big)}.
\end{align*}
By using Theorem \ref{mainpositive} for $p=1$, we have
\begin{align*}
\Vert F_{it}\Vert_{H^1(\rn)}&=\Vert T_{\sigma_{it}}f_{it}^{\epsilon}\Vert_{H^1(\rn)}\lesssim \sup_{j\in \zz}{\big\Vert \sigma_{it}(2^j\cdot)\widehat{\Psi}\big\Vert_{L_{s_0}^{n/s_0,1}(\rn)}}\Vert f_{it}^{\epsilon}\Vert_{H^1(\rn)}\\
&\lesssim \sup_{j\in \zz}{\big\Vert \sigma_{it}(2^j\cdot)\widehat{\Psi}\big\Vert_{L_{s_0}^{n/s_0,1}(\rn)}}\Big( \Vert f\Vert_{(H^1(\rn),H^2(\rn))_{\theta}} +\epsilon\Big),
\end{align*} where (\ref{intermediate}) is applied in the last inequality. Similarly, with $L^2$-boundedness,
\begin{align*}
\Vert F_{1+it}\Vert_{H^2(\rn)}&=\Vert T_{\sigma_{1+it}}f_{1+it}^{\epsilon}\Vert_{H^2(\rn)}\lesssim \Vert \sigma_{1+it}\Vert_{L^{\infty}(\rn)}\Vert f_{1+it}^{\epsilon}\Vert_{H^2(\rn)}\\
&\lesssim \sup_{j\in \zz}{\big\Vert \sigma_{1+it}(2^j\cdot)\widehat{\Psi}\big\Vert_{L^{\infty}(\rn)}}\Big( \Vert f\Vert_{(H^1(\rn),H^2(\rn))_{\theta}} +\epsilon\Big).
\end{align*} 
Therefore, once we prove
\begin{equation}\label{mainappendix}
{\big\Vert \sigma_{it}(2^j\cdot)\widehat{\Psi}\big\Vert_{L_{s_0}^{n/s_0,1}(\rn)}}, {\big\Vert \sigma_{1+it}(2^j\cdot)\widehat{\Psi}\big\Vert_{L^{\infty}(\rn)}}\lesssim \big\Vert \sigma(2^j\cdot)\widehat{\Psi}\big\Vert_{L_s^{n/s,1}(\rn)}
\end{equation} uniformly in $j$,
then we are done by using (\ref{hardyinterpol}) and taking $\epsilon\to 0$.

Let us prove (\ref{mainappendix}).
We first observe that
\begin{align*}
&\sigma_z(2^j\xi)\widehat{\Psi}(\xi)\\
&=\frac{(1+\theta)^{n+1}}{(1+z)^{n+1}}\sum_{k\in\zz}{(I-\Delta)^{-\frac{s_0(1-z)+s_1z}{2}}\Big(\sigma^{k,s} h_{k,s}^{\frac{s-(1-z)s_0-zs_1}{n}}\Big)(\xi/2^{k-j})\widehat{\Theta}(\xi/2^{k-j})}\widehat{\Psi}(\xi)
\end{align*} is actually finite sum over $k$ near $j$ due to the supports of $\widehat{\Theta}$ and $\widehat{\Psi}$, and for simplicity, we may therefore take $k=j$ in the calculation below.

Using Lemma \ref{katoponce}, we have
\begin{equation*}
{\big\Vert \sigma_{it}(2^j\cdot)\widehat{\Psi}\big\Vert_{L_{s_0}^{n/s_0,1}(\rn)}}\lesssim \frac{1}{(1+|t|^2)^{(n+1)/2}}\Big\Vert (I-\Delta)^{\frac{(s_0-s_1)it}{2}}\Big(\sigma^{j,s}h_{j,s}^{\frac{s-s_0+(s_0-s_1)it}{n}}\Big)\Big\Vert_{L^{n/s_0,1}(\rn)}.
\end{equation*}
Then we apply \cite[Lemma 3.5, 3.7]{Gr_Sl} to bound this by
\begin{align*}
&\Big\Vert \sigma^{j,s}h_{j,s}^{\frac{s-s_0+(s_0-s_1)it}{n}}\Big\Vert_{L^{n/s_0,1}(\rn)}\lesssim \big\Vert (\sigma^{j,s})^*(r)r^{(s-s_0)/n}\big\Vert_{L^{n/s_0,1}(0,\infty)}\\
&\lesssim \Vert (\sigma^{j,s})^*\Vert_{L^{n/s,1}(0,\infty)}\lesssim \Vert \sigma^{j,s}\Vert_{L^{n/s,1}(\rn)}=\big\Vert \sigma(2^j\cdot)\widehat{\Psi}\big\Vert_{L_s^{n/s,1}(\rn)}.
\end{align*}

On the other hand, using \cite[Lemma 3.4, 3.5, 3.7]{Gr_Sl},
\begin{align*}
&{\big\Vert \sigma_{1+it}(2^j\cdot)\widehat{\Psi}\big\Vert_{L^{\infty}(\rn)}}\\
&\lesssim \frac{1}{(1+|t|^2)^{(n+1)/2}}\Big\Vert (I-\Delta)^{-s_1/2}(I-\Delta)^{(s_0-s_1)it/2}\Big(\sigma^{j,s}h_{j,s}^{\frac{s-s_1+(s_0-s_1)it}{n}} \Big)\Big\Vert_{L^{\infty}(\rn)}\\
 &\lesssim \frac{1}{(1+|t|^2)^{(n+1)/2}}\Big\Vert (I-\Delta)^{(s_0-s_1)it/2}\Big(\sigma^{j,s}h_{j,s}^{\frac{s-s_1+(s_0-s_1)it}{n}} \Big)\Big\Vert_{L^{n/s_1,1}(\rn)}\\
 &\lesssim \Big\Vert \sigma^{j,s}h_{j,s}^{\frac{s-s_1+(s_0-s_1)it}{n}} \Big\Vert_{L^{n/s_1,1}(\rn)} \lesssim \big\Vert (\sigma^{j,s})^*(r)r^{(s-s_1)/n}\big\Vert_{L^{n/s_1,1}(0,\infty)}\\
&\lesssim \Vert (\sigma^{j,s})^*\Vert_{L^{n/s,1}(0,\infty)}\lesssim \Vert \sigma^{j,s}\Vert_{L^{n/s,1}(\rn)}=\big\Vert \sigma(2^j\cdot)\widehat{\Psi}\big\Vert_{L_s^{n/s,1}(\rn)},
\end{align*}
which finishes the proof of (\ref{mainappendix}).

\medskip
\noindent {\bf Acknowledgment:} We would like to thank Professors M. Mastylo and A. Seeger for providing us important references related to real interpolation. We would also like to thank A. Seeger for pointing out to us the 
content of the remark after Theorem \ref{mainpositive}. We are grateful to the anonymous referees for a careful reading and useful comments.

\end{document}